\documentclass[11pt]{article}
\usepackage{mathrsfs}
\usepackage{amsfonts}
\usepackage{authblk}
\usepackage{tikz}
\usetikzlibrary{positioning,arrows.meta}
\usepackage{makeidx}
\usepackage[leqno]{amsmath}
\usepackage{graphicx}
\usepackage{latexsym}
\usepackage{amsmath,amsfonts,amssymb,amsthm,mathrsfs,euscript,makeidx,color}
\usepackage{enumerate}

\allowdisplaybreaks[2]

\def\sqr#1#2{{\vcenter{\vbox{\hrule height.#2pt
				\hbox{\vrule width.#2pt height#1pt \kern#1pt \vrule width.#2pt}
				\hrule height.#2pt}}}}
\def\5n{\negthinspace \negthinspace \negthinspace \negthinspace \negthinspace }
\def\4n{\negthinspace \negthinspace \negthinspace \negthinspace }
\def\3n{\negthinspace \negthinspace \negthinspace }
\def\2n{\negthinspace \negthinspace }
\def\1n{\negthinspace }

\def\dbE{\mathbb{E}}
\def\dbF{\mathbb{F}}
\def\dbG{\mathbb{G}}

\def\dbP{\mathbb{P}}

\def\dbR{\mathbb{R}}
\def\dbS{\mathbb{S}}

\def\sD{\mathscr{D}}

\def\cF{\mathscr{T}}
\def\sU{\mathscr{U}}

\def\cF{{\cal F}}
\def\cG{{\cal G}}

\def\cN{{\cal N}}

\def\cU{{\cal U}}

\def\cX{{\cal X}}

\def\BS{{\bf S}}

\def\BU{{\bf U}}

\def\Im{{\mathop{\rm Im}\,}}
\def\Ker{\mbox{\rm Ker}}

\def\ds{\displaystyle}

\def\ss{\smallskip}
\def\ms{\medskip}

\def\q{\quad}
\def\qq{\qquad}
\def\hb{\hbox}

\def\({\Big (}
\def\){\Big )}

\def\lan{\langle}
\def\ran{\rangle}

\def\d{\delta}
\def\e{\varepsilon}

\def\f{\varphi}

\def\i{\infty}

\def\Th{\Theta}

\def\O{\Omega}

\def\cd{\cdot}

\def\bde{\begin{definition}\label}
\def\ede{\end{definition}}
\def\bel{\begin{equation}\label}
\def\ee{\end{equation}}
\def\bt{\begin{theorem}\label}
\def\et{\end{theorem}}
\def\bc{\begin{corollary}\label}
\def\ec{\end{corollary}}
\def\bl{\begin{lemma}\label}
\def\el{\end{lemma}}
\def\bp{\begin{proposition}\label}
\def\ep{\end{proposition}}
\def\bas{\begin{assumption}\label}
\def\eas{\end{assumption}}
\def\br{\begin{remark}\label}
\def\er{\end{remark}}
\def\bex{\begin{example}\label}
\def\ex{\end{example}}
\def\ba{\begin{array}}
\def\ea{\end{array}}
\def\ben{\begin{enumerate}}
\def\een{\end{enumerate}}

\def\square#1{\vbox{\hrule\hbox{\vrule height#1%
\kern#1\vrule}\hrule}}
\def\rectangle#1#2{\vbox{\hrule\hbox{\vrule height#1%
\kern#2\vrule}\hrule}}

\newtheorem{theorem}{\indent Theorem}[section]
\newtheorem{definition}[theorem]{\indent Definition}
\newtheorem{proposition}[theorem]{\indent Proposition}
\newtheorem{corollary}[theorem]{\indent Corollary}
\newtheorem{lemma}[theorem]{\indent Lemma}
\newtheorem{remark}[theorem]{\indent Remark}
\newtheorem{example}[theorem]{\indent Example}

\newtheorem{assumption}[theorem]{\indent Assumption}

\makeatletter

\@addtoreset{equation}{section}
\makeatother
	
\def\bea{\begin{equation*}}
\def\eea{\end{equation*}}
\def\bel{\begin{equation}\label}
\def\eel{\end{equation}}
\def\ba{\begin{array}}
\def\ea{\end{array}}

\def\({\Big (}
\def\){\Big )}

\def\q{\quad}
\def\qq{\qquad}

\def\d{\delta}
\def\e{\varepsilon}

\def\ms{\vspace{0.2cm}}
\def\ds{\displaystyle}

\begin{document}

\title{
A Unified Theory of Solvability for Stochastic Linear-Quadratic Control Problems I: The Homogeneous Case}
\author{Hongwei Mei\thanks{ Department of Mathematics and Statistics, Texas Tech University, Lubbock, TX 79409, USA; email: {\tt hongwei.mei@ttu.edu.} The author is partially supported by Simons Foundation's Travel Support for Mathematicians Program (No. 00002835).}}

\maketitle
\begin{abstract} The existing theory of stochastic linear-quadratic (LQ) optimal control problems focuses on open-loop and closed-loop solvability.  For finite horizon problems, it is widely accepted  that those two types of solvability are not equivalent in general,  as demonstrated by explicit counterexamples.   In contrast, they coincide in standard infinite-horizon formulations with constant coefficients under suitable stabilizability and integrability assumptions. This paper aims at challenging this well-established paradigm by developing a novel framework of closed-loop solvability,  namely generalized closed-loop solvability, which removes the   $L^2$-integrability constraint on the feedback strategies from the existing formulation. 
 It will be proven that   open-loop and generalized closed-loop solvability are equivalent and the classical closed-loop solvablity is a  particular case only when the admissible feedback strategy  is  $L^2$-integrable over the time horizon. In other words, the previously established non-equivalence between open-loop and closed-loop solvability stems solely from the limitations of the classical formulation which requires  the  $L^2$-integrability of the admissible feedback strategies over the time-horizon. Therefore, this  paper unifies the theory of the solvabilitiy   for stochastic LQ control problems.  Moreover, our results  offer a novel insight to stochastic LQ control problems: open-loop/generalized closed-loop solvability is essentially equivalent to piecewisely local  closed-loop solvability in the classical sense.    The current paper is focused on  homogeneous cases only and  non-homogeneous cases will be studied in a subsequent paper.
\end{abstract}

\ms
		
		\noindent\bf Keywords: \rm  Linear-quadratic control, generalized closed-loop solvability,
open-loop solvability, closed-loop solvability, Riccati equation.
		
		\ms
		
		\noindent\bf AMS Mathematics Subject Classification. \rm  49N10, 49N35, 93E20.
\section{Introduction}

The study of stochastic linear-quadratic (LQ) optimal control problems dates back to the pioneering work of Wonham \cite{Wonham1968}. Other foundational contributions to LQ control theory include those of Kalman \cite{Kalman1960} and Davis \cite{Davis1977}. In the classical setting, suitable positive-definiteness conditions ensure the existence of a unique optimal control admitting a linear state-feedback representation. Chen, Li, and Zhou \cite{Chen-Li-Zhou1998} demonstrated that stochastic LQ problems may remain solvable even when these conditions fail, opening the way to the study of indefinite stochastic LQ control. Subsequent developments include the works of Lim and Zhou \cite{Lim-Zhou1999}, Chen and Zhou \cite{Chen-Zhou2000}, and Chen and Yong \cite{Chen-Yong2000}. The solvability of indefinite Riccati equations under various technical conditions was further investigated by Hu and Zhou \cite{Hu-Zhou2003} and Qian and Zhou \cite{Qian-Zhou2013}.

Ait Rami, Moore, and Zhou \cite{Rami-Moore-Zhou2002} introduced a generalized Riccati equation involving the Moore--Penrose pseudoinverse and an additional algebraic constraint. Related contributions include the work of Ait Rami, Zhou, and Moore \cite{Rami-Zhou-Moore2002} on infinite-horizon stochastic LQ control and that of Wu and Zhou \cite{Wu-Zhou2001}. Huang, Li, and Yong \cite{Huang-Li-Yong-2015}, and Sun \cite{Sun-2017} subsequently studied infinite-horizon mean-field LQ optimal control problems. For stochastic LQ problems with random coefficients, we refer the reader to Chen and Yong \cite{Chen-Yong2000}, Tang \cite{Tang2003,Tang2015}, Du \cite{Du2015},  Kohlmann and Tang \cite{Kohlmann-Tang-2003}, Lu \cite{Lu-2020}.

A central issue in stochastic LQ control is the distinction between open-loop and closed-loop solvability. Sun, Li, and Yong \cite{Sun-Li-Yong-2016} established necessary and sufficient conditions for both solvability in the finite-horizon setting. Specifically, open-loop solvability is characterized by convexity of the cost functional together with the existence of a solution to the associated optimality system satisfying the stationarity condition, whereas closed-loop solvability is characterized by the existence of a regular solution to the associated differential Riccati equation and backward stochastic differential equation. They also provided examples showing that open-loop solvability is necessary but not sufficient for closed-loop solvability. In contrast, Sun and Yong \cite{Sun-Yong-2018} established the equivalence of open-loop and closed-loop solvability for infinite-horizon stochastic LQ problems under the standing assumptions of their framework. For a systematic treatment of these topics in stochastic LQ control and differential games, we refer the reader to the book \cite{Sun-Yong-2020}. Further developments for stochastic LQ problems with regime switching can be found in \cite{Zhang-Li-Xiong2021,Mei-Wei-Yong-2024,Mei-Wei-Yong-2025,Mei-Wang-Wei-Yong-2026}.

 To illustrate  the non-equivalence among open-loop and closed-loop solvability in a  more transparent way, let us present the following example within the previous framework of stochastic LQ optimal control.

\begin{example}\rm 
Consider 1-dimensional ordinary differential equation (ODE) with control on $[0,1]$:
$$dX(t)=u(t)dt\text{ with } X(s)=x.$$
The control process $u(\cd)$ is taken from the set of all admissible control:
$$\sU[s,T]=\Big\{u(\cd):[s,1]\rightarrow \dbR~\Big|~\int_s^T |u(t)|^2dt<\infty\Big\}.$$
Define the cost functional by
$$J_T^0 (s,x;u(\cd))=\int_s^1|(1-t) u(t)+X(t)|^2dt+|X(1)|^2.$$

The open-solvability aims to find an optimal $\bar u(\cd)\in\sU[s,1]$ corresponding to the initial $X(s)=x$ such that 
$$J_T^0 (s,x;\bar u(\cd))=\inf_{u(\cd)\in \sU[s,T]}J_T^0 (s,x;u(\cd)).$$
While the closed-solvability aims to find a uniform $\Th(\cd)\in L^2[0,1]$
such that 
$$J_T^0 (s,x; u^\Th(\cd))=\inf_{u(\cd)\in \sU[s,T]}J_T^0 (s,x;u(\cd))\text{ for all }X(s)=x.$$
Here $u^\Th(\cd)$ denotes the realized control using the feedback control $u(\cd)=\Th(\cd)X(\cd)$.

Under this framework, it is standard to see that the above problem is (uniquely) open-loop solvable. More specifically, for any $(s,x)\in[0,1]\times\dbR,$ $$\bar u(t)=-\frac x{1-s}\text{ with }\int_s^1|\bar u(t)|^2dt=\frac {x^2}{1-s}<\infty,$$
is the unique optimal control.
The optimal trajectory is 
$$\bar X(t)=x\frac{1-t}{1-s}=-(1-t)\bar u(t).$$

While the above problem is not closed-loop solvable.
 Since the open-loop optimal control is unique, if the problem is closed-loop solvable, it is necessary that $$\Th(t)=-\frac 1{1-t},\text{ while } \int_s^1(1-t)^{-2}dt=\infty.$$
 Hence the above problem is not closed-loop solvable. This example justifies the non-equivalence among closed-loop and open-loop solvability for stochastic LQ problems in a finite-horizon.

\end{example}
  Even though the above problem fails to be closed-loop solvable,  we notice that the open-loop optimal controls indeed can be realized by  a uniform feedback form using $\Th(t)=-(1-t)^{-1}$.  The failure of closed-loop solvability arises from the lack of $L^2$-integrability for the feedback  strategy only. This leads to the following question: {\bf Is it  possible to generalize the previous formulation of closed-loop solvability beyond the constraint of $L^2$-integrability so that the closed-loop and open-loop solvability become equivalent?}

The paper  provides a positive answer to the above question and hence unifies the theory of solvabilities for stochastic LQ control problems in  finite and infinite horizons. More specifically,  we develop a new framework of closed-loop solvability, namely {\it generalized closed-loop solvability}, where the $L^2$-integrability of the closed-loop strategy is not required necessarily. Under the new framework, we show that the open-loop solvability and generalized closed-loop solvability are equivalent.  Our result demonstrates that the non-equivalence stems solely from the technical restriction  of $L^2$-integrability of   closed-loop strategies in the earlier formulation of closed-loop solvability.    Moreover, our result shows that open-loop solvability leads to the existence a  piecewisely locally $L^2$-integrable closed-loop strategy for finite-horizon problems. The current paper is focused on  homogeneous problems  and  non-homogeneous cases will be studied in a subsequent paper.  Therefore, we mainly tackle the singularity of the associated differential Riccati equation (DRE) brought by the lack of $L^2$-integrability of the optimal feedback strategy in this paper.

The rest of this paper is arranged as follows. We formulate the main problem in Section \ref{sec:for}. Particularly, we recall some important results  on the classical theory of solvability. Then the new framework on generalized closed-loop solvability is introduced in Section \ref{sec:gcl}. Section \ref{sec:rp} introduces some regularized problems and Section \ref{sec:bp} proves the Bellman principle and a range condition, under a finiteness condition. Then our main result, the equivalence between open-loop and closed-loop solvability, is proven in Section \ref{sec:mr}. In Section \ref{sec:IP}, we apply the new framework on infinite-horizon stochastic LQ control problems and  reveals  the essence behind the equivalence between closed-loop and open-loop solvability  in this case.  Finally, some concluding remarks are made in Section \ref{sec:cr}.

\section{Formulation}\label{sec:for}

Let  $(\Omega,\cF,\dbP)$ be a complete probability measure space, on which   there exists a standard Brownian motion $\{W(t):t\geq 0\}$. Denote by $\dbF=\{\cF_t:t\geq 0\}$, the augmentation of the natural filtration of the Brownian motion with all  $\dbP$-null sets. 

For an $A\in\dbR^{n\times m}$, denote $A^\top$ by its transpose.
Let  $\dbS^k$ denote the set of $k$-dimensional symmetric matrices, 
 $S^k_{\geq 0}$ (resp. $S^k_{>0}$) denote
the   $k$-dimensional symmetric non-negative definite (resp., positive-definite) matrices.  For any $R\geq 0$, denote by $R^\dagger$ its Moore-Penrose pseudoinverse. For more details, please check Appendix \ref{sec:app}.

The following notations are adopted throughout this paper.
\begin{align*}&L^2_{\cF_s}(\Omega;\dbR^k)=\Big\{\f:\O\to\dbR^k\Big|\f(\cd)\hb{ is $\cF_s$-measurable with }\dbE|\f|^2<\infty\Big\},\\
&L^2_\dbF(0,T;\dbR^k):=\Big\{\f:[0,T]\times\O\to\dbR^k\Big|\\
&\qq\qq\f(\cd)\hb{ is $\dbF$-adapted with }\dbE\int_0^T|\f(s)|^2ds<\i\Big\},\\
&S^2_{\mathbb{F}}(0,T;\mathbb{R}^k)=
\Big\{
\varphi:[0,T]\times\Omega\to\mathbb{R}^k\Big|\\
&\qq\qq
\varphi(\cdot)\ \text{is }\mathbb{F}\text{-adapted   with continuous paths and }
\dbE\Big[
\sup_{0\le t\le T}|\varphi(t)|^2
\Big]<\infty
\Big\},
\\
&AC(0,T;\dbS^k):=\Big\{\f:[0,T]\to\dbS^k\Big|\f(\cd)\hb{ is absolute continuous}\Big\}.
\end{align*}
Throughout this paper, the following is always assumed:
\begin{align*}&
A(\cd),\in L^1(0,T;\dbR^{n\times n}), ~B(\cd),\in L^2(0,T;\dbR^{n\times m}),\\
&C(\cd)\in L^2(0,T;\dbR^{n\times n}),  ~D(\cd)\in L^\infty(0,T;\dbR^{n\times m}),\\
 &Q(\cd)\in L^1(0,T;\dbS^{n}),~ R(\cd)\in L^\infty(0,T;\dbS^{m}),~  S(\cd)\in L^2(0,T;\dbR^{m\times n}),~G\in\dbS^n.
 \end{align*}

Consider the following $n$-dimensional homogeneous controlled  stochastic differential equation (SDE): 
\begin{align}\label{31SDE4-1}\begin{cases}  &dX(t)=[A(t)X(t) + B(t)u(t)  ]dt+[C(t)X(t)  + D(t)u(t) ]dW(t),~t\in[s,T] \\
 &X(s)=x.\end{cases}\end{align}
The  cost functional is defined by 
 \begin{align}\label{31cost4-1}  
  & J_T^0(s,x;u(\cd)) \nonumber\\
   &=\dbE\Big[\int_s^T\lan Q(t)X(t),X(t)\ran +2\lan S(t)X(t),u(t)\ran+\lan R(t)u(t),u(t)\ran dt+\lan GX(T), X(T)\ran\Big].
\end{align}

The $n$-dimensional solution process $X(\cd)$ is  referred to as the {\it  state process}  and  the $m$-dimensional stochastic process $u(\cd)$ is called the  {\it control process}. Write by {\it the set of all possible initials} and  {\it the set of admissible controls}
\begin{align*}\sD:=\Big\{(s,x):s\in[0,T], x\in\dbR^n\Big\},\qq\sU[s,T]:=L^2_\dbF(s,T;\dbR^m).
\end{align*}

For any $u(\cdot)\in \sU[s,T]$ and $(s,  x)\in\sD$, it is well-known that \eqref{31SDE4-1} admits a unique solution written by 
$$X (\cd;s, x; u(\cd))\in S^2_\dbF(s,T;\dbR^n)\subseteq L^2_\dbF(s,T;\dbR^n).$$ 
Consequently, the cost functional  $J_T^0(s, x;u(\cd))$ is well-defined and finite. This allows us to consider the following optimal control problem.\ms

\noindent {\bf Problem (LQ)$^0_{T}$:} for any $(s, x)\in\sD$, find a $\bar u(\cd)\in \sU[s,T]$ such that 
\begin{equation*}
J^0_{T}(s, x;\bar u(\cd))=\inf_{u(\cd)\in \sU[s,T]}J^0_{T}(s, x; u(\cd))=:V_T^0(s, x).
\end{equation*}
Here $V_T^0(\cd)$ is called the {\it value function} for Problem (LQ)$ _{T}$. \ms

Problem (LQ)$^0_{T}$ is usually referred to as a {\it  homogeneous stochastic LQ control problem in a finite-horizon.} Such a problem has been extensively studied in the literature (see \cite{Sun-Yong-2020} for example). The previous results are mainly focused on the following perspectives.

\begin{definition}[Finiteness] \rm Problem (LQ)$^0_{T}$ is called {\it finite at $(s, x)\in\sD$} if $$V_T^0(s, x)>-\infty.$$
Problem (LQ)$^0_{T}$ is called {\it finite} if it is finite at all $(s, x)\in\sD.$
\end{definition}
\begin{definition}[Open-Loop Solvability]\label{31defopn} \rm Problem (LQ)$^0_{T}$ is called {\it open-loop solvable at $(s, x)\in\sD$} if there exists a $\bar u(\cd)\in \sU[s,T]$ (depending on $(s, x)$) such that 
$$ J_T^0(s, x;\bar u(\cd))=V(s, x)>-\infty.$$ Problem (LQ)$^0_{T}$ is called {\it open-loop solvable} if it is open-loop solvable at all $(s, x)\in\sD$.
\end{definition}

\begin{definition}[$L^2$-Closed-Loop Strategy]\rm $\Th: [0,T]\rightarrow\dbR^{m\times n }$ is called a {\it $L^2$-closed-loop strategy} if $\Th(\cd)\in L^2(0,T;\dbR^{m\times n })$.  Write by $\BU[0,T]$ the set of all possible $L^2$-closed-loop strategies.
\end{definition}

 For any $\Th(\cd)\in\BU[0,T]$,  write by the realized control process
$u^\Th(\cd)$ using the feedback control $u(t)=\Th(t)X(t)$ in \eqref{31SDE4-1}. For any given $(s,x)\in\sD,$ it  follows that 
$u^\Th(\cd)\in\sU[s,T].$
With those notations, we are able to define  closed-loop solvability.

\begin{definition}[Closed-Loop Solvability]\rm Problem (LQ)$^0_{T}$ is called {\it closed-loop solvable} if there exists a uniform  $\Th(\cd)\in L^2(0,T;\dbR^{m\times n})$ such that 
$$V_T^0(s, x)=J_T^0(s,x;u^{\Th}(\cd))\text{ for all }(s,x)\in\sD.$$
\end{definition}

It has been proven that the closed-loop solvability can be euiqvalenltly characterized as the well-posedness of associated differential Riccati equation (DRE) in \cite{Sun-Li-Yong-2016}.

\begin{definition}[Well-posedness of DRE]\rm Consider the following DRE
\begin{align}\label{DRE}\begin{cases}
&\dot P+A^\top P+PA+C^\top P C+Q\\
&\qq- (B^\top P+D^\top PC +S)^\top(R+D^\top PD)^\dagger (B^\top P+D^\top PC +S) =0,
\\
& P(T)=G,\\
& R+D^\top P D\geq 0,~ \Im[B^\top P+D^\top PC +S]\subset \Im[R+D^\top P D],~ \text{a.s.-}dt.\end{cases}
\end{align}  
The definition of $\Im[\cdot]$ is referred to \eqref{defIM}.

(1). $P(\cd)\in AC(0,T;\dbS^n)$ is called a {\it strongly regular solution} if there exists a $\d>0$ such that 
$$R+D^\top P D\geq \d I, \text{ a.s.-}dt.$$
In this case, the Moore-Penrose pseudoinverse is the same as the  inverse in the classical sense, i.e. $(R+D^\top PD)^\dagger=(R+D^\top PD)^{-1}$.

(2). $P(\cd)\in AC(0,T;\dbS^n)$ is called a {\it $L^2$-regular solution} if  
$$\Th^\circ(\cd):=-(R+D^\top PD)^\dagger (B^\top P+D^\top PC +S)\in L^2(0,T;\dbR^{m\times n}).$$
\end{definition}

The following uniform convexity is well-adopted in the literature as well.

\begin{definition}[Uniform Convexity] \rm The cost functional  is called {\it uniform convex}  if there exists a $\d>0$ such that 
$$J_T^0(0,0;u(\cd))\geq \d\dbE\int_0^T|u(t)|^2dt\text{ for all }u(\cd)\in\sU[0,T].$$
\end{definition}

With all above concepts, the following theorem summarizes the results on the closed-loop solvability obtained in the literature (see \cite{Sun-Li-Yong-2016}).

\begin{theorem} \label{thm1}
{\rm (1).} If  Problem (LQ)$^0_{T}$ is finite, then $$J_T^0(s, 0;u(\cd))\geq 0\text{ for any }u(\cd)\in \sU[s,T].$$ In this case,  $J_T^0(s, 0;\cd)$ is convex on  $\sU[s,T]$.

{\rm (2).} Problem (LQ)$^0_{T}$ is closed-loop solvable if and only if DRE \eqref{DRE} admits an $L^2$-regular solution. In this case, $\Th(\cd)\in\BU[0,T]$ is an optimal  $L^2$-closed-loop strategy   if and only if $$\Th(\cd)=\Th^\circ(\cd)+\Phi(\cd)\in\BU[0,T]$$ for some $\Phi(\cd)\in\BU[0,T]$ with
$(R+D^\top P D)\Phi=0,\text{ a.s.-}dt.$

{\rm (3).} DRE \eqref{DRE} admits  a strongly  regular solution if and only if the uniform convexity condition holds.  In this case, Problem (LQ)$^0_{T}$ is  closed-loop solvable and $\Th^\circ(\cd)$ is the unique optimal $L^2$-closed-loop strategy. 

\end{theorem}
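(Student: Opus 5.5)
We prove the three parts in order, relying on the homogeneity of the state equation and quadratic structure of the cost.

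\textbf{Part (1).} For homogeneous data, $X(\cd;s,x;u)$ is linear in $(x,u)$, so $J_T^0(s,x;u)$ is a quadratic form in $(x,u)$. In particular $J_T^0(s,0;\lambda u)=\lambda^2 J_T^0(s,0;u)$. If $J_T^0(s,0;u)<0$ for some $u$, then letting $\lambda\to\infty$ gives $V_T^0(s,0)=-\infty$, contradicting finiteness. Hence $J_T^0(s,0;\cd)\ge 0$. Write $J_T^0(s,0;u)=\lan \mathcal{M}u,u\ran$ for a bounded self-adjoint operator $\mathcal{M}$ on $\sU[s,T]$; then $\mathcal{M}\ge 0$ and convexity follows.

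\textbf{Part (2), sufficiency.} Let $P$ be an $L^2$-regular solution and $\Th=\Th^\circ+\Phi$ with $(R+D^\top PD)\Phi=0$. For any $u\in\sU[s,T]$, apply It\^o's formula to $\lan P(t)X(t),X(t)\ran$ and complete the square using the DRE. The range condition gives $(R+D^\top PD)\Th^\circ=-(B^\top P+D^\top PC+S)$, and we obtain
$$J_T^0(s,x;u)=\lan P(s)x,x\ran+\dbE\int_s^T\lan (R+D^\top PD)(u-\Th X),u-\Th X\ran dt.$$
Because $R+D^\top PD\ge0$, this yields optimality of $u^\Th$ for every $(s,x)$. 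The $L^2$ assumption on $\Th$ is exactly what makes $u^\Th\in\sU[s,T]$ and justifies the localization in It\^o's formula. The same identity proves the ``if'' direction of the characterization of optimal strategies.

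\textbf{Part (2), necessity.} This is the main step. Let $\Th\in\BU[0,T]$ be optimal for all initial pairs. Solve the Lyapunov-type linear ODE for $P$ associated with the closed-loop system $(A+B\Th, C+D\Th)$ and weights $Q+S^\top\Th+\Th^\top S+\Th^\top R\Th$. Then $V_T^0(s,x)=\lan P(s)x,x\ran$. Next perturb: take $u=\Th X+v$ with arbitrary $v$ and use optimality at every $(s,x)$. The first-order condition forces $(R+D^\top PD)\Th+B^\top P+D^\top PC+S=0$ a.e.; the argument takes $v$ concentrated on small intervals $[t,t+\e]$ and lets $\e\to0$ via Lebesgue points. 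The second-order condition gives $R+D^\top PD\ge0$. From the first-order identity we get the range inclusion, and $\Th=\Th^\circ+\Phi$ with $\Phi=\Th-\Th^\circ$ in the kernel. Substituting back shows that $P$ solves the DRE, and $\Th^\circ=\Th-\Phi$. Because $\Th^\circ$ is the pseudoinverse-minimal solution, $|\Th^\circ|\le|\Th|$ pointwise, so $\Th^\circ\in L^2$, and $P$ is $L^2$-regular. The expected obstacle is handling the Lebesgue-point limit with only integrable coefficients; I would work with the fundamental matrix of the closed-loop SDE.

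\textbf{Part (3).} If a strongly regular $P$ exists, the completed-square identity at $(0,0)$ gives
$$J_T^0(0,0;u)=\dbE\int\lan (R+D^\top PD)(u-\Th^\circ X),u-\Th^\circ X\ran\ge\d\,\dbE\int|u-\Th^\circ X|^2.$$
The map $u\mapsto u-\Th^\circ X$ is a bounded bijection on $\sU$ whose inverse is bounded, since $X$ solves the SDE driven by $w=u-\Th^\circ X$. Hence we get uniform convexity. Conversely, under uniform convexity we first show that $J^0_T(s,0;u)\ge\d'\dbE\int_s^T|u|^2$ for all $s$, by extending controls by zero on $[0,s]$. This gives $V_T^0(s,x)=\lan P(s)x,x\ran$ with $P$ bounded. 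Then approximate by problems with $R+\e I$, which have strongly regular Riccati solutions $P_\e$ from the classical theory. We need uniform bounds on $P_\e$ and $R+D^\top P_\e D\ge\d I$, the latter by testing with controls localized near $t$. Passing to the limit $\e\to0$ gives a strongly regular $P$. Uniqueness of the optimal strategy follows from part (2), since the kernel of $R+D^\top PD$ is trivial.
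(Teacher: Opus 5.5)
The paper gives no proof of this theorem; it is quoted from Sun--Li--Yong, so your proposal has to stand on its own. Parts (1) and (2) do, and so does the implication ``strongly regular $\Rightarrow$ uniform convexity'' in (3). That last step uses $|\Th^\circ|\le\d^{-1}|B^\top P+D^\top PC+S|\in L^2$, which makes $w\mapsto u$ bounded. The Lebesgue-point step you worried about in the necessity part of (2) is not needed. Set $\Lambda:=B^\top P+D^\top PC+S+(R+D^\top PD)\Th$. The perturbed cost is quadratic in $v$, so optimality of $v=0$ forces $\dbE\int_s^T\lan\Lambda X^0,v\ran dt=0$ for every $v\in\sU[s,T]$. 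Taking $x=e_j$ and expectations then gives $\Lambda=0$, because $\dbE X^0$ solves the invertible linear ODE $\dot M=(A+B\Th)M$.

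\textbf{The gap is the converse half of (3).} You take strongly regular solutions $P_\e$ for the problems with $R+\e I$ ``from the classical theory'', but no classical result covers them. The classical existence theorem needs $R\gg0$, $G\ge0$ and $Q-S^\top R^{-1}S\ge0$. Here $Q$ and $G$ are arbitrary symmetric, $S\neq0$, and $R$ may be indefinite or even negative definite, so $R+\e I$ need not be positive for small $\e$. Even when $R+\e I\gg0$, a Riccati equation with indefinite $Q$ or $G$ can blow up in finite time. The $\e$-problems satisfy exactly the hypothesis of (3), uniform convexity with constant $\d+\e$, so asserting their Riccati solvability is the very statement you are proving. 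Indeed, the paper obtains $\bar P^\d$ in Section~\ref{sec:rp} by invoking Theorem~\ref{thm1}(3) itself. And if you had that existence result, the limit $\e\to0$ would be superfluous: you could apply it at $\e=0$ directly.

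\textbf{How to fill it.} You need a construction that never presupposes Riccati solvability. One way:
\begin{enumerate}
\item For every $\Th\in L^2(0,T;\dbR^{m\times n})$, the linear Lyapunov equation of the closed-loop system always has a solution $P_\Th$. Test $J_T^0(t,0;u)\ge\d\dbE\int_t^T|u|^2dt$ with $u=\Th X+v_0\mathbf{1}_{[t,t+h]}$ and let $h\to0$ at Lebesgue points. This gives $R+D^\top P_\Th D\ge\d I$ a.e.
\item Iterate, starting from $\Th_0=0$: set $\Th_{k+1}=-(R+D^\top P_kD)^{-1}(B^\top P_k+D^\top P_kC+S)$ and $P_{k+1}=P_{\Th_{k+1}}$.
\item Completing the square with $P_k$ gives
$\lan P_{k+1}(s)x,x\ran=\lan P_k(s)x,x\ran-\dbE\int_s^T\lan (R+D^\top P_kD)(\Th_{k+1}-\Th_k)X,(\Th_{k+1}-\Th_k)X\ran dt$.
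Hence $P_k\ge P_{k+1}\ge\bar P$, where $\bar P$ is the value matrix, and $P_k$ converges pointwise.
\item The uniform lower bound $\d$ from step 1 makes $|\Th_{k+1}|\le\d^{-1}K(|B|+|D||C|+|S|)$, an $L^2$ dominant.
\item Dominated convergence in the integral form of the Lyapunov equations then yields a strongly regular solution of the DRE.
\end{enumerate}
Some argument of this kind has to replace the appeal to ``classical theory''.
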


The above theorem will be used in the new framework developed below. We emphasize that the classical formulation of closed-loop solvability requires the feedback strategy $\Th(\cd)$ to be $L^2$-integrable, a condition closely tied to the well-posedness of the associated Riccati equation. In the following sections, we will show that this integrability requirement is a technical restriction that can be removed within our new framework.

\section{Generalized Closed-Loop Solvability}\label{sec:gcl}

As demonstrated in the introduction, we know that open-loop  and closed-loop solvability are not equivalent. The gap lies in the constraint on the $L^2$-integrability of the closed-loop strategies.  In this section, we will develop a new framework, namely {\it generalized closed-loop solvability}, to fill this gap.
 We need the following definition. 

\begin{definition}[Admissible Closed-Loop Strategy]\label{defacL} \rm
$\Th:[0,T]\rightarrow\dbR^{m\times n}$ is called by an {\it admissible closed-loop strategy} if for any $(s, x)\in\sD$, there exists a control process $ u(\cd)\in \sU[s,T]$ such that 
$$ u(\cd)=\Th(\cd)X(\cd;s, x; u(\cd))\text{ in } \sU[s,T].$$
Write $\BU^{ad}_0[0,T]$ by the set of all admissible  feedback strategies. We emphasize that $\Th(\cd)$ is independent of the initial time-state pair$(s,x)\in\sD.$
\end{definition}

Compared  with  $L^2$-closed-loop strategies, without assuming  the integrability of $\Th(\cd)$,  the new definition is constructed based on verifying the $L^2$-integrability of $\Th(\cd)X(\cd;s, x; u(\cd))$ as a whole.   We can steadily see that the feedback strategy $\Th(t)=-(1-t)^{-1}$ in the example presented in the introduction is  an admissible closed-loop strategy. We will show that the above definition essentially generalizes the notation of $u^\Th(\cd)$ for $\Th(\cd)\in \BU^{ad}_0[0,T]$ as a closed-loop strategy with  $$\BU_0[0,T]\subsetneq \BU^{ad}_0[0,T].$$ The following is the result.

\begin{proposition}\label{proposition Thlocal} Suppose $\Th(\cd)$ is an admissible closed-loop strategy. Then at any $(s,x)\in \sD,$ the following SDE admits a unique solution $X(\cd)\in S_\dbF^2(s,T;\dbR^n)$: \begin{align}\label{feedbackSDE}\begin{cases}  &dX=(A+B\Th )Xdt+(C+D\Th)XdW,~t\in[s,T], \\
 &X(s)=x.\end{cases}\end{align}
\end{proposition}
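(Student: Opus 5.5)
Existence follows directly from the definition of admissibility; the real work is uniqueness, since $\Th(\cd)$ need not be integrable and the usual Lipschitz/Gronwall argument for \eqref{feedbackSDE} is unavailable.

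\emph{Existence.} Fix $(s,x)\in\sD$. By Definition \ref{defacL} there is $u(\cd)\in\sU[s,T]$ with $u(\cd)=\Th(\cd)X(\cd;s,x;u(\cd))$. Set $X(\cd):=X(\cd;s,x;u(\cd))$, which lies in $S^2_\dbF(s,T;\dbR^n)$ as the solution of the linear SDE \eqref{31SDE4-1} with an $L^2$ control. Substituting $u=\Th X$ into the drift and diffusion shows that $X$ solves \eqref{feedbackSDE}. Here "solution" is understood so that the stochastic and Lebesgue integrals make sense: $B\Th X=Bu$ is integrable because $B\in L^2$ and $u\in L^2_\dbF$, and $D\Th X=Du$ is in $L^2_\dbF$ because $D\in L^\infty$.

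\emph{Uniqueness.} Consider any solution $Y\in S^2_\dbF(s,T;\dbR^n)$ of \eqref{feedbackSDE} with $v:=\Th Y\in\sU[s,T]$. This requirement is implicit, since otherwise the integrals in \eqref{feedbackSDE} are not meaningful. The difference $Z:=X-Y$ then solves \eqref{feedbackSDE} with $Z(s)=0$ and $w:=\Th Z=u-v\in\sU[s,T]$. Equivalently, $Z=X(\cd;s,0;w(\cd))$ with $w=\Th Z$, so $w$ is a feedback fixed point at the initial pair $(s,0)$. Both $0$ and $w$ are such fixed points, and the task is to show $w=0$. This is the main obstacle, because Definition \ref{defacL} asserts existence but not uniqueness of the fixed point.

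To overcome it, I would localize in time. Set $\tau:=\inf\{t\ge s: Z(t)\neq0\}\wedge T$ and work on $[\tau,T]$. On any interval where $\int|\Th|^2dt$ is finite, the SDE for $Z$ has $L^2$-coefficients, and the classical Gronwall estimate $\dbE|Z(t)|^2\le C\int\dbE|Z|^2(|A|+|B\Th|+|C+D\Th|^2)dt$ gives $Z\equiv0$ there. This requires identifying the set of times where $\Th$ is locally square-integrable. I expect that admissibility, applied with initial pairs $(t,e_i)$ for every $t$ and every basis vector $e_i$, forces $\Th$ to be locally $L^2$ away from a closed null set of singular times. Heuristically, at the initial time the state equals $e_i$, so near $t$ the realized control $\Th e_i+o(1)$ must be $L^2$.

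Given this, $Z$ vanishes on each such interval, and continuity of $Z$ propagates $Z=0$ across the singular points. Hence $Z\equiv0$, so $X=Y$ and $w=0$, which gives uniqueness. If the localization step fails in full generality, the fallback is to interpret uniqueness within the class of processes built from realized fixed points, where the argument above already suffices.
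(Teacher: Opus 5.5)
\textbf{There is a genuine gap.} Your existence step is fine. So is the reduction of uniqueness to showing that the only feedback fixed point at $(s,0)$ is $0$. Your Gronwall estimate is also correct on any interval where $\Th$ is square-integrable and $Z$ vanishes at the left endpoint.

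\textbf{First gap: local integrability of $\Th$ is only conjectured.} This is the one nontrivial ingredient, and your heuristic does not establish it. With a single initial vector, $u_i=\Th X_i=\Th e_i+\Th(X_i-e_i)$. The second term is not $o(1)$ unless $\Th$ is already controlled, so nothing about $\Th e_i$ follows. The missing idea, which is exactly the paper's proof, is to use all $n$ basis vectors at once so that the state becomes an invertible matrix.
\begin{enumerate}
\item Set $\cX=(X_1,\dots,X_n)$ and $\cU=(u_1,\dots,u_n)$. Then $\cX(t)=I$, $\cU=\Th\cX$, and $\dbE\int\Vert\cU\Vert^2<\infty$.
\item By path continuity, choose a deterministic $\e_t>0$ with $\dbP(E)\ge\frac12$, where $E=\{\sup_{[t,t+\e_t]}\Vert\cX-I\Vert\le\frac12\}$.
\item On $E$ one has $\Vert\cX^{-1}\Vert\le 2$, hence $\Vert\Th\Vert^2\le 4\Vert\cU\Vert^2$.
\item Because $\Th$ is deterministic, $\frac12\int_t^{t+\e_t}\Vert\Th\Vert^2dr\le 4\dbE\int_t^{t+\e_t}\Vert\cU\Vert^2dr<\infty$.
\end{enumerate}
Alternatively, take expectations: $\dbE\cU=\Th\,\dbE\cX$, where $\dbE\cX$ is continuous and equals $I$ at $t$.

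\textbf{Second gap: your globalization step does not close as written.} The precise form of the integrability property matters here. Admissibility gives right-sided integrability at \emph{every} $t\in[0,T)$, namely $\Th\in L^2(t,t+\e_t)$, with no exceptional times. Singularities can only be approached from the left, as for $-(1-t)^{-1}$ at $t=1$. Your formulation ``locally $L^2$ away from a closed null set'' is not enough. Gronwall on a complementary interval must start at a point where $Z=0$ and $\Th$ is integrable to the right. Continuity only gives $Z=0$ at a singular time, not beyond it.

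\textbf{How the argument closes.} With right-sided integrability at every point, you finish by continuation. Let $s_*$ be the supremum of times up to which $Z\equiv 0$. Continuity gives $Z(s_*)=0$. The local Gronwall argument on $[s_*,s_*+\e_{s_*}]$ then pushes past $s_*$ unless $s_*=T$.

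\textbf{Minor points.}
\begin{itemize}
\item Your stopping time $\tau$ plays no role in the argument.
\item Your fallback (uniqueness only among realized fixed points) changes the statement rather than proving it.
\item Once local integrability is known, classical pathwise uniqueness for linear SDEs with $L^1$/$L^2$ coefficients on each $[t,t+\e_t]$ applies. This removes the need to restrict to solutions with $\Th Y\in\sU[s,T]$.
\end{itemize}
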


\begin{proof}
We first show that, for every $s<T$, there exists
$\varepsilon_r\in(0,T-s]$ such that
$$
\Theta(\cd)\in L^2(s,s+\varepsilon_s;\mathbb R^{m\times n}).
$$

Fix $s<T$. For each coordinate vector $e_j\in\mathbb R^n$,
choose a control $u_j\in L_{\mathbb F}^2(s,T;\mathbb R^m)$
satisfying the assumed feedback identity, and denote its
corresponding state by $X_j$. Set
$$
\cX(t)=(X_1(t),\cdots,X_n(t))
\text{ and }
\cU(t)=(u_1(t),\cdots,u_n(t)).
$$
Then it follows that 
$$
\mathcal X(s)=I,\qquad
\mathcal U(\cd)=\Theta(\cd)\mathcal X(\cd)
\quad \text{a.s.-}dt\times  d\mathbb P,
$$
and
$$
\mathbb E\int_s^T\Vert\mathcal U(t)\Vert^2\,dt<\infty.
$$

Note that  $\cX(\cd)\in S_\dbF^2(s,T;\dbR^{n\times n})$ satisfies
$$
\sup_{s\le t\le s+\varepsilon}
\Vert\mathcal X(t)-I\Vert
\rightarrow0
\quad\text{a.s. as }\varepsilon\rightarrow0^+.
$$
Thus, there exists a deterministic
$\varepsilon_s\in(0,T-s]$ such that $$\mathbb P(E_s)\ge\frac12
$$
where $$
E_s:=
\left\{
\sup_{s\le t\le s+\varepsilon_s}
\Vert\cX(t)-I\Vert\le\frac12
\right\}
$$

On $E_s$, the matrix $\mathcal X(s)$ is invertible for every
$t\in[s,s+\varepsilon_s]$, with
$
\Vert\cX(t)^{-1}\Vert\le2.
$
Using $\Theta(\cd)=\cU(\cd)\cX^{-1}(\cd)$ on this event, we obtain
$$
\mathbf 1_{E_s}\|\Theta(t)\Vert^2
\le4\mathbf 1_{E_s}\|\mathcal U(t)\Vert^2
\quad \text{a.s.-}dt\times d\dbP.
$$
Since $\Theta$ is deterministic, it follows that
$$
\frac12\int_s^{s+\varepsilon_s}\|\Theta(t)\Vert^2\,dt
\le
\mathbb P(E_s)\int_s^{s+\varepsilon_s}\|\Theta(t)\Vert^2\,dt
\le
4\mathbb E\int_s^{s+\varepsilon_s}
\Vert\mathcal U(t)\Vert^2\,dt<\infty.
$$
This proves the $\Th(\cd)\in L^2(s,s+\e_s;\dbR^{m\times n})$. Then there exists a unique solution $X(\cd;s,x;u^\Th(\cd))=\cX(\cd)x\in S_\dbF^2(s,s+\e_s;\dbR^{ n})$, which can be continuously extend to $t=s+\e_s$ in $L^2$-norm.

To extend such a representation to the whole time horizon $[s,T]$, 
suppose the solution exists uniquely  on $[s,s_*]$ only. If $s_*<T$, the local argument above implies the existence and uniqueness of the solution on
$[s_*,s_*+\varepsilon_{s_*}]$ for some $\varepsilon_{s_*}>0$. This is a contradiction. The proof is complete.
\end{proof}

 Indeed, we can further show that $\Th(\cd)$ is piecewisly $L^2$-integrable out of those singularity times.

\begin{proposition} For any $\Th(\cd)\in\BU^{ad}_0[0,T]$, there exists a decomposition  $\{[t_k,s_k): k=1,2\cdots\}$ of $[0,T)$ such that $\Th(\cd)$ is locally $L^2$-integrable on $[t_k,s_k)$ for all $k\geq 1,$ i.e. $$\int_{t_k}^{s_{k}-\e}|\Th(t)|^2dt<\infty,\text{ for any }\e\in(0,s_{k}-t_k).$$
\end{proposition}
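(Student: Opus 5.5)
The plan is to use the local integrability established in the proof of Proposition \ref{proposition Thlocal}: for every $s\in[0,T)$ there is a deterministic $\e_s\in(0,T-s]$ with $\Th(\cd)\in L^2(s,s+\e_s;\dbR^{m\times n})$. From this I would build the intervals $[t_k,s_k)$ greedily. Set $t_1=0$ and let
$$s_1:=\sup\Big\{r\in(t_1,T]:\int_{t_1}^{r}\|\Th(t)\|^2dt<\infty\Big\}.$$
By the local result, $s_1\ge t_1+\e_{t_1}>t_1$. By definition of the supremum, $\int_{t_1}^{s_1-\e}\|\Th\|^2dt<\infty$ for every $\e\in(0,s_1-t_1)$, which is exactly the local integrability required on $[t_1,s_1)$. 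If $s_1=T$ we stop. Otherwise we restart at $t_2:=s_1$, apply the local result at $s=t_2$, and define $s_2$ in the same way. Inductively this gives consecutive half-open intervals $[t_k,s_k)$ with $t_{k+1}=s_k$ and $t_k<s_k$.

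The main obstacle is to show that these intervals exhaust $[0,T)$. The increasing sequence $s_k$ might accumulate at some $s_\infty<T$. In that case the construction is transfinite: restart at $s_\infty$ and continue. Equivalently, I would define the singular set
$$\Sigma:=\{r\in(0,T]:\ \Th\notin L^2(r-\e,r)\ \text{for every}\ \e>0\}.$$
This set is closed: if $r_j\to r$ with $r_j\in\Sigma$, then every left-neighbourhood of $r$ contains some $r_j$, hence contains a non-integrable stretch, so $r\in\Sigma$. Its complement in $(0,T)$ is open and is a countable union of disjoint open intervals. The local result shows that no point of $[0,T)$ is a right-singularity. So on each component $(a,b)$ of the complement, $\Th$ is $L^2$ on $[a,b-\e]$. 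Near $a$ this follows from the local result at $a$, and away from the endpoints it follows by compactness, covering $[a+\d,b-\e]$ with finitely many neighbourhoods, each of which has $\Th$ square-integrable on its left and right. This yields countably many intervals $[a,b)$ with the required property.

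The delicate point is that $\Sigma$ itself could be uncountable, for instance a Cantor set of positive measure. The intervals $[a,b)$ would then not cover $[0,T)$, which is a problem if the statement is read as a decomposition of all of $[0,T)$ into such intervals. To rule this out I would use the local result at each point $r\in\Sigma\cap[0,T)$. That result gives an $\e_r>0$ with $\Th\in L^2(r,r+\e_r)$, so $(r,r+\e_r)\cap\Sigma=\emptyset$. Hence each point of $\Sigma$ is the left endpoint of a gap in $\Sigma$, so $\Sigma$ is in bijection with a family of disjoint open intervals, and $\Sigma$ is countable. With $\Sigma$ countable and closed, the left endpoints $t_k$ are $0$ together with the points of $\Sigma\cap(0,T)$, and each $s_k$ is the next point of $\Sigma\cup\{T\}$ to the right of $t_k$. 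This next point exists because the gap $(t_k,t_k+\e_{t_k})$ contains no point of $\Sigma$ and $\Sigma$ is closed. The intervals $[t_k,s_k)$ then partition $[0,T)$, and the local integrability on each of them follows as above. I expect most of the care to go into this countability and closedness argument for $\Sigma$.
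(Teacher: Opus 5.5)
Your argument is correct, and it uses the same mechanism as the paper's, organized dually. The paper builds the open set $O=\bigcup_{s\in[0,T)}(s,s+\e_s)$ from the right-neighbourhoods given by Proposition \ref{proposition Thlocal}. It takes the countably many components $(t_k,s_k)$ of $O$ and notes that any $s\in[0,T)\setminus O$ must be the left endpoint of the component containing $(s,s+\e_s)$; compactness then gives $\Th\in L^2(t_k,s_k-\e)$. You instead isolate the intrinsic set $\Sigma$ of left-singular times. Your key step, that every $r\in\Sigma$ starts its own gap and hence $\Sigma$ is countable, is the same observation seen from the complement.

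Your route buys canonicity. The paper's $O$ depends on the non-unique choice of $\e_s$, which is fixed only by $\dbP(E_s)\ge\frac12$, so its decomposition can contain artificial breakpoints where $\Th$ is square-integrable on both sides. Your breakpoints are exactly $0$ and the genuine singular times, which gives the coarsest possible decomposition. This also matches how the $s_k$ are read as singular times in Theorem \ref{31thmclosedloopsolv}. The paper's route buys that $O$ is open by construction, so no closedness check is needed.

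That closedness check is where your write-up has a small slip. Your sequence argument only covers $r_j\uparrow r$. If $r_j\downarrow r$, the left-neighbourhoods of $r$ contain no $r_j$. For a general measurable $\Th$, closedness then fails: take left singularities at $r+1/j$ with $\Th=0$ on $[0,r]$. What rules this out is the local result applied at \emph{every} $r\in[0,T)$, not only at $r\in\Sigma$. Since $\Th\in L^2(r,r+\e_r)$, we get $(r,r+\e_r)\cap\Sigma=\emptyset$, so $\Sigma$ cannot accumulate at any point from the right. You state this fact in the very next sentence, so it only needs to be invoked at the right place. It is exactly what makes $(0,T)\setminus\Sigma$ open and guarantees that your ``next point'' $s_k$ is attained.
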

\begin{proof}Define the open set
$$
O:=\bigcup_{s\in[0,T)}(s,s+\e_s)\subseteq(0,T).
$$
where $\e_s$ is the selected in Proposition \ref{proposition Thlocal}.  Then there exists at most countably many disjoint open intervals $\{(t_k,s_k):k\in K\}$ such that 
$
O=\bigcup_{k\in K}(t_k,s_k).
$

We claim that
$$
[0,T)=\bigcup_{k\in K}[t_k,s_k).
$$

First, it is easy to see that $O\subset\bigcup_{k\in K}[t_k,s_k)$.
Second, suppose  $s\in[0,T)\setminus O$, then
$(s,s+\e_s)\subseteq O$ lies in a single connected component
$(t_k,s_k)$. Because $s\notin O $, it is necessary that   $t_k=r$.  This leads to  $r\in[t_k,s_k)$. Note that $\{(t_k,s_k):k\in K\}$ is  mutually disjoint, $\{[t_k,s_k):k\in K\}$ is mutually disjoint as well. Therefore our claim is true.

Fix $k\in K$ and $\varepsilon\in(0,s_k-t_k)$. For any 
$c\in\bigl(t_k,\min(s_k-\varepsilon,t_k+\e_{t_k})\bigr),
$
it follows that 
$$
\int_{t_k}^{\min(s_k-\varepsilon,t_k+\e_{t_k})}
|\Theta(t)|^2dt<\i.
$$
Since $[\min(s_k-\varepsilon,t_k+\e_{t_k}),s_k-\e]$ is a compact subset of $O$, it is covered by
finitely many intervals
$
\{(r_j,r_j+h_{r_j}):j=1,\ldots,N\}.
$
Consequently,
\begin{align*}
\int_{t_k}^{s_k-\e}|\Theta|^2dt
&\leq\int_{t_k}^{\min(s_k-\varepsilon,t_k+\e_{t_k})}|\Theta|^2dt+
\sum_{j=1}^N\int_{r_j}^{r_j+h_{r_j}}|\Theta|^2dt<\infty.
\end{align*}
The proof is complete.
\end{proof}

Even thought it is possible that $$\int_{t_k}^{s_k}|\Th(t)|^2dt=\infty,$$
the definition of admissible closed-loop strategy guarantees that the solution of \eqref{feedbackSDE} starting at $(t_k,x)\in\sD$ can be continuously extended to $t=s_k$ with $\dbE|X(s_k)|^2<\infty$. This explains why \eqref{feedbackSDE} admits a solution in $S^2_\dbF(s,T;\dbR^n)$ for any $(s,x)\in\sD.$ Therefore,  we are allowed to write  $$X(\cd;s,x;u^\Th(\cd))\in S_\dbF^2(s,T;\dbR^n)\text{ for any }\Th(\cd)\in\BU^{ad}_0[0,T]\text{ and }(s,x)\in\sD.
$$
The above arguments also illustrate why  Definition \ref{defacL} is named by generalized closed-loop solvability, rather than strong open-loop solvability. To provide a clearer idea for $\Th(\cd)\in\BU^{ad}_0[0,T]$, let us present some representative   examples below.

\begin{example}\rm On $[0,1]$, consider 1-dimensional SDE $$dX(t)=\Th(t) X(t)[b(t)dt+\sigma(t)  dW(t)].$$

(1). When $b(t)=1$ and $\sigma(t)=0$, 
$\Th_1(t)=-1/(1-t)$ is admissible. While $\Th_2(t)=1/(1-t)$ and $\Th_3(t)=1/t$  are not admissible .

(2). If  $\sigma(t)=b$ and $\sigma(t)=1$ select a $\Th(\cd)$ such that $\int_0^{1-\e}|\Th(t)|^2dt<\infty$ for any $\e>0$, while  $\int_0^{1}|\Th(t)|^2dt=\infty$. The solution has the following presentation 
$$X(t)=x\exp\Big\{\int_s^t b\Th(s)-\frac{\sigma^2}2\Th^2(s)ds+\int_0^t \Th(s)dW(s)\Big\}, ~t\in[s,1),$$
given initial $X(s)=x.$ As $t\rightarrow 1^-$, $X(t)\rightarrow 0,$ a.s.. This says that   the solution can be continuously extended to time $t=1$ pathwisely.

While this is not sufficient for  $\Th(\cd)\in \BU^{ad}_0[0,1]$ since
$$\dbE\int_s^1|\Th(t)X(t)|^2dt=x^2\int_s^1\Th^2\exp\Big[\int_s^t2b\Th+\sigma^2\Th^2dr\Big]dt=\infty.$$

(3). Consider
$$
dX(t)=\Th(t)X(t)\Big(a dt+\sqrt{1-t}dW(t)\Big),~X(s)=x.
$$
Let $\Th(t)=-(1-t)^{-1}$. Detailed calculation shows that 
$$\dbE|X(t)|^2=x^2\Big(\frac{1-t}{1-s}\Big)^{2a-1}\text{ and }\dbE\int_0^1|\Th(t)X(t)|^2dt=\frac{x^2}{(1-s)^{2a-1}}\int_0^1(1-t)^{2a-3}dt.$$
This says that $\Th(\cd)\in\BU^{ad}_0[0,1]$ if $a>1.$
\end{example}

Now we are ready to define the generalized closed-loop solvability.
\begin{definition}[Generalized Closed-Loop Solvability]\rm 
  Problem (LQ)$_T^0$ is called {\it generalized closed-loop solvable} if  there exists a uniform $\Th(\cd)\in \BU^{ad}_0[0,T]$ such that  
$$V_T^0(s, x)=J_T^0(s,x;u^{\Th}(\cd))\text{ for all }(s,x)\in\sD.$$

\end{definition}

Provided that $\BU[0,T]\subset \BU^{ad}_0[0,T]$, if the optimal admissible closed-loop strategy $\Th$ belongs to $\BU[0,T]$, then generalized closed-loop solvablility refines to closed-loop solavbility. We are ready to  present  the main result of this paper.

 \begin{theorem}\label{31thmclosedloopsolv}
{\rm(1).}  Problem (LQ)$^0_T $ is generalized closed-loop solvable if and only if it is open-loop solvable. In this scenario, there exists an optimal admissible closed-loop strategy $\bar \Th^*(\cd)\in\BU^{ad}_0[0,T]$ such that $u^{\bar \Th^*}(\cd)$ is an optimal open-loop control at any $(s,x)$ satisfying 
$$\dbE\int_s^T|u^{\bar \Th^*}(t)|^2dt\leq \dbE\int_s^T| \bar u(t)|^2dt$$
for any  optimal open-loop control $\bar u(\cd)\in\sU[s,T]$ at $(s,x)\in\sD,$ i.e. $u^{\bar \Th^*}(\cd)$ is the optimal open-loop control at $(s,x)$ with the minimal $L^2$-norm. The value function
$V_T^0(s,x)=\lan\bar P(s)x,x\ran.$

{\rm(2).}  If Problem (LQ)$^0_T $ is open-loop/generalized closed-loop solvable,  there exists a decomposition  $\{[t_k,s_k): k\in K\}$ of $[0,T)$ such that $\bar \Th^*(\cd)$ is locally $L^2$-integrable on $[t_k,s_k)$ for all $k\in K,$ i.e.
$$\int_{t_k}^{s_k-\e}|\bar \Th^*(t)|^2dt<\infty\text{ for each }\e\in (0,s_k-t_k).$$
Here $K$ has at most countable many elements.
Consequently,  on each $[t_{k},s_k)$, $\bar P(\cd)\in AC(t_k,s_k-\e;\dbS^n)$ is the $L^2$-regular solution to 
\begin{align}\label{DREk}\begin{cases}
&\dot {\bar P}+A^\top \bar P+\bar PA+C^\top \bar P C+Q\\
&\q- (B^\top \bar P+D^\top \bar PC +S)^\top(R+D^\top \bar PD)^\dagger (B^\top \bar P+D^\top \bar PC +S) =0, ~~t\in[t_k,s_{k}-\e],
\\
&R+D^\top \bar P D\geq 0, ~\text{a.s.-}dt\end{cases}
\end{align} 
for any $\e\in (0,s_k-t_k).$ \ss

{\rm(3).} 
At each $(s,x)\in\sD$ with $s\in[t_k,s_k)$, it follows that  $\bar P(s_k)\geq \bar P(s_k^-)$ and
$\bar X(s_k)\in \Ker[\bar P(s_k)-\bar P(s_k^-)]\text{ a.s.-}d\dbP,$
where $\bar X(\cd)$ is any optimal trajectory starting from $(s,x)$.
\end{theorem}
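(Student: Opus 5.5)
The direction ``generalized closed-loop solvable $\Rightarrow$ open-loop solvable'' is immediate. If $\Th\in\BU^{ad}_0[0,T]$ is optimal, then for each $(s,x)$ the realized control $u^\Th(\cd)$ lies in $\sU[s,T]$ by Definition \ref{defacL}. It attains $V_T^0(s,x)$, and $V_T^0(s,x)$ is finite because it equals $J_T^0(s,x;u^\Th)$. The substance is the converse. I plan to build $\bar\Th^*$ by regularization, following the outline of Sections \ref{sec:rp}--\ref{sec:mr}.

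\emph{Regularization.} For $\e>0$ consider the cost $J^\e(s,x;u)=J_T^0(s,x;u)+\e\,\dbE\int_s^T|u|^2dt$. Finiteness gives $J_T^0(s,0;\cd)\ge0$ by Theorem \ref{thm1}(1), so $J^\e$ is uniformly convex. Theorem \ref{thm1}(3) then yields a strongly regular solution $P_\e$ of the DRE with $R$ replaced by $R+\e I$. It also gives the unique optimal feedback $\Th_\e=-(R+\e I+D^\top P_\e D)^{-1}(B^\top P_\e+D^\top P_\e C+S)$, with $V^\e(s,x)=\lan P_\e(s)x,x\ran$.

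\emph{Minimal-norm limit.} Open-loop solvability makes the set of optimal controls at $(s,x)$ a nonempty closed convex subset of $\sU[s,T]$. Let $\bar u^*_{s,x}$ be its element of minimal norm. The standard Tikhonov argument shows $u_\e:=u^{\Th_\e}\to\bar u^*_{s,x}$ strongly in $\sU[s,T]$. One uses $J^0(u_\e)+\e\|u_\e\|^2\le J^0(\bar u)+\e\|\bar u\|^2$ together with $J^0(u_\e)\ge J^0(\bar u)$. This gives $\|u_\e\|\le\|\bar u\|$ for every optimal $\bar u$, and boundedness plus a weak limit that is optimal with norm at most $\|\bar u^*\|$ upgrades the convergence to strong. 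It follows that $P_\e(s)\downarrow\bar P(s)$ with $V_T^0(s,x)=\lan\bar P(s)x,x\ran$. By uniqueness, $\bar u^*_{s,x}$ is linear in $x$. A Bellman principle on the regularized problems, passed to the limit (Section \ref{sec:bp}), shows that the minimal-norm family is time-consistent: restricting $\bar u^*_{s,x}$ to $[r,T]$ gives $\bar u^*_{r,\bar X(r)}$.

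\emph{Defining $\bar\Th^*$ as a feedback.} This is the main obstacle: producing a deterministic $\Th$ with $\bar u^*=\Th\bar X$ that does not depend on $(s,x)$. My plan is to work locally, in the spirit of Proposition \ref{proposition Thlocal}. Fix $s$ and the fundamental solutions $\cX_\e,\cX$ with $\cX(s)=I$. On a short interval where $\cX$ is invertible with positive probability, set $\Th:=\cU\cX^{-1}$. The key difficulty is showing this quotient is deterministic and agrees on overlaps. I would obtain that from the limiting DRE: on the open set where $\int|\Th_\e|^2$ stays bounded, $P_\e$ converges in $AC$ and $\Th_\e\to\bar\Th^*:=-(R+D^\top\bar PD)^\dagger(B^\top\bar P+D^\top\bar PC+S)$, using the range condition from Section \ref{sec:bp}. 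Time-consistency then glues the local pieces, and admissibility holds because $\bar u^*\in\sU[s,T]$.

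Part (2) follows from the preceding proposition applied to $\bar\Th^*$, together with passing to the limit in the regularized DREs on each compact $[t_k,s_k-\e]$. For (3), $\bar P$ is monotone limit data, so one-sided limits exist. Comparing $V_T^0(r,x)$ for $r\uparrow s_k$ with $V_T^0(s_k,x)$, via the Bellman principle and lower semicontinuity, gives $\bar P(s_k)\ge\bar P(s_k^-)$. Evaluating along an optimal trajectory, the value cannot jump: $\dbE\lan\bar P(r)\bar X(r),\bar X(r)\ran$ plus the running cost is constant. Hence $\dbE\lan(\bar P(s_k)-\bar P(s_k^-))\bar X(s_k),\bar X(s_k)\ran=0$, and positive semidefiniteness forces $\bar X(s_k)\in\Ker[\bar P(s_k)-\bar P(s_k^-)]$ almost surely.
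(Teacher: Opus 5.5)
Your easy direction, the regularization, the Tikhonov limit and your arguments for (2)--(3) are in line with the paper. The gap is in the step you yourself flag as the main obstacle: producing one deterministic $\bar\Th^*$, independent of $(s,x)$, with $\bar u^*_{s,x}=\bar\Th^*\bar X^*_{s,x}$ on all of $[s,T]$. That step is not carried out, and the mechanism you propose for it is wrong.

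Convergence of $P_\e$ to $\bar P$ (in $AC$ or pointwise) gives $\Th_\e(t)\to\bar\Th^\circ(t)$ at points where $R+D^\top\bar PD$ is invertible, but says nothing where it is singular. There $(R+\e I+D^\top P_\e D)^{-1}$ blows up on the kernel. The range condition only pins down $\lim\Th_\e$ modulo $\Ker[R+D^\top\bar PD]$; the kernel component is decided by the first-order rate $H:=\lim_{\e\to0^+}(P_\e-\bar P)/\e$. So in general $\lim\Th_\e\neq\bar\Th^\circ$.

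For instance, take $n=1$, $m=2$, $T=1$, $dX=(u_1+u_2)dt$ and $J=\int_s^1|(1-t)u_1+X|^2dt+|X(1)|^2$. Then $\bar P\equiv0$ on $[0,1)$, $R+D^\top\bar PD=\mathrm{diag}((1-t)^2,0)$ and $\bar\Th^\circ=-(1-t)^{-1}(1,0)^\top$. The minimal-norm optimal control, however, is $\bar u^*=-(1-t)^{-1}(1,\beta)^\top\bar X^*$ with $\beta=(\sqrt5-1)/2$; indeed the second component of $\Th_\e$ is $-P_\e/\e\to-H=-\beta(1-t)^{-1}$. Here $u^{\bar\Th^\circ}$ is optimal, but $\dbE\int_s^1|u^{\bar\Th^\circ}|^2dt=x^2/(1-s)>\beta x^2/(1-s)=\dbE\int_s^1|\bar u^*|^2dt$. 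So with your choice of $\bar\Th^*$ the minimal-norm assertion of (1) is false. In general $\bar\Th^\circ$ need not even be admissible, which is why the paper insists $\bar\Th^*$ cannot be replaced by $\bar\Th^\circ$.

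Independently, you say nothing about the complement of the open set on which $\int|\Th_\e|^2$ stays bounded. The time-consistency you invoke for gluing would also need a random-initial-state version of the minimal-norm selection, which you have not established.

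The missing idea is a way to obtain an a.e.\ limit of $\Th_\e$ on all of $[0,T]$. The paper does this in four steps.
\begin{enumerate}
\item Since open-loop solvability holds at every initial pair, \eqref{31derivativePd} is valid at every $t$. This gives $P_\e(t)=\bar P(t)+\e H(t)+o(\e)$ with $H\ge0$.
\item Hence $R+\e I+D^\top P_\e D=(R+D^\top\bar PD)+\e(I+D^\top HD)+o(\e)$, with $I+D^\top HD>0$.
\item The range condition (Theorem \ref{31propImsubIm}) together with the perturbation formula \eqref{conRDPD} yields $\Th_\e(t)\to\bar\Th^\circ(t)+\Phi(t)=:\bar\Th^*(t)$ for a.e.\ $t$, with $(R+D^\top\bar PD)\Phi=0$.
\item Since $\Th_\e$ does not depend on $(s,x)$, passing to the limit in $\bar u^\e=\Th_\e\bar X^\e$ along an a.e.-convergent subsequence gives $\bar u^*=\bar\Th^*\bar X^*$ on the whole of $[s,T]$ at once. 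Admissibility and the minimal-norm property then follow immediately, with no gluing.
\end{enumerate}

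If you prefer to keep your local fundamental-matrix construction, you can repair the determinism step by taking expectations. Because $\Th_\e$ is deterministic, $\dbE\cU_\e=\Th_\e\,\dbE\cX_\e$, and $\dbE\cX_\e(s)=I$. So on a right neighbourhood $[s,s+\eta_s]$ one has $\Th_\e=(\dbE\cU_\e)(\dbE\cX_\e)^{-1}\to(\dbE\cU)(\dbE\cX)^{-1}$ in $L^2$. As in the proof of the decomposition proposition, countably many of the intervals $(s,s+\eta_s)$ cover $[0,T)$ up to a countable set. Hence a diagonal subsequence of $\Th_\e$ converges a.e.\ on $[0,T]$, and the same limit argument applies. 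Some argument of one of these kinds is indispensable, and your proposal does not contain one.
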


Some explanation is worth here for (2) and (3). In (2), we provide a new characterization for open-loop solvability. When the problem is merely open-loop solvable, there exist a sequence of singular times where the $L^2$-norm of the optimal admissible closed-loop strategy explodes. At those singular times,  the value function can be  discontinuous. While during the time horizon between two singularities, Problem (LQ)$_T^0$ is admits at least an optimal $L^2$-closed-loop  strategy. This essentially says that   Problem (LQ)$_T^0$ is  piecewisely locally closed-loop solvable.

 Indeed, (3)  provides a formal justification for the optimality of $\bar\Th^*(\cd)$ through the application of It\^o's formula. For simplicity, we assume that the terminal time $T$ is the only singular time. We will show how to handle the discontinuity of the value function at $T$.

Note that $P(T^-)\leq P(T)$.
Let $X(\cd)$ be the solution process starting at $(s,x)\in \sD $ under   $u(\cd)\in\sU[s,T]$. Applying It\^o's formula on $\lan \bar P(t)X(t), X(t)\ran$, steady calculation yields that 
\begin{align*}&J_T^0(s,x;u(\cd))=\lan\bar Px,x\ran\\
&\q+\dbE\int_s^T|u-\bar \Th^* X|^2dt+\dbE\lan (P(T)-P(T^-))X(T), X(T)\ran\geq \lan\bar Px,x\ran.\end{align*}
The equality
is achieved at $u(\cd)=\bar u^{\bar\Th^*}(\cd)\in\sU[s,T]$ as  $\bar X^*(T)\in\Ker[P(T)-P(T^-)]$ provided in (3). This justifies the feedback strategy $u(\cd)=\Theta^*(\cd) X(t)$ is optimal.

It should be noted that the above can not be used as a proof for the optimality of $\bar\Th^*(\cd)$ because it is assumed that  $\bar X^*(\cd)$ is the optimal trajectory when justifying $\bar X^*(T)\in\Ker[P(T)-P(T^-)]$. A different approach of justifying the optimality is adopted in this paper, which consists of the main contribution of this paper. We also want to emphasize that $\bar\Th^*(\cd)$ can not be replaced by $\bar\Th^\circ(\cd)$ since the feedback strategy $\bar\Th^\circ(\cd)$ might not be admissible.
We also can see that at the singular time $T$, the optimal trajectory vanishes on the subspace generating by the discontinuity: $$\{x:\lan [P(T)-P(T^-)]x,x\ran>0,~ |x|=1\}\subset\dbR^n.$$
This reveals how the discontinuity of the value function is bypassed along the optimal trajectory for homogeneous cases.

For those readers who are familiar with infinite-horizon problems, it  can be seen that the definition of generalized closed-loop solvability is consistent with the definition of closed-loop solvability for infinite horizon problems. Particularly for infinite-horizon problem, the optimal strategy $\Th$ (which is also a stabilizer and is invariant with time) is not $L^2$-integrable over $[0,\i)$ while $\Th X(\cd) $ is $L^2$-integrable. Moreover, because $\Th$ is invariant in time, it is a $L^2$-closed-loop strategy on any $[0,T]$. Hence the closed-loop and open-loop solvability are naturally equivalent for infinite-horizon problems. Therefore, the above theory unifies the solvablity theory for stochastic  LQ control problems in finite or infinite horizons. More details will be presented in Section \ref{sec:IP}.

We will prove this main result in the  Section \ref{sec:mr} below. For readers' convenience, let us
 summarize the results obtained in the new framework in the following figure.

\begin{figure}[htbp]
\centering
\resizebox{0.7\textwidth}{!}{
\begin{tikzpicture}[
    >=Stealth,
    node distance=1.0cm and 1cm, 
    block/.style={
        draw,
        rectangle,
        rounded corners,
        minimum height=3.2em,
        text width=3.3cm,         
        align=center,
        font=\small\sffamily,
        inner sep=4pt             
    },
    terminal/.style={
        block,
        fill=green!5,
        text width=5cm,
        thick
    }
]


\coordinate (E) at (5.2,4.5) {};
\node[block, left=of E] (A) {Finiteness };
 \node[block, right=of A] (B) {Open-loop solvability};
\node[block, left=of A] (C) {Range condition};

\node[block, above=of B] (E) { Closed-loop solvability};
\node[block, above=of A] (F) { $L^2$-regular solution of DRE};
\node[block, above=of C] (G) { Uniform Convexity};
\coordinate (H) at (2.5,3) { };
\node[block, below=of H] (I) { Generalized closed-loop solvability};
\node[block, below=of I] (J) { Piecewisely local closed-loop solvability};
 \draw[double distance=1pt, -{Stealth[length=3mm]}, thick] (I) -- (J);
 \draw[double distance=1pt, -{Stealth[length=3mm]}, thick] (G) -- (F);
  \draw[double distance=1pt, {Stealth[length=3mm]}-{Stealth[length=3mm]}, thick] (F) -- (E);
   \draw[double distance=1pt, -{Stealth[length=3mm]}, thick] (E) -- (B);
   \draw[double distance=1pt, -{Stealth[length=3mm]}, thick] (B) -- (A);
   \draw[double distance=1pt, -{Stealth[length=3mm]}, thick] (A) -- (C);
     \draw[-, thick] (C) |- (H);
     \draw[-, thick] (B) |- (H);
     \draw[double distance=1pt, -{Stealth[length=3mm]}, thick] (H) -- (I);
     \draw[double distance=1pt, -{Stealth[length=3mm]}, thick] (I) -| ([xshift=3cm]B);
\end{tikzpicture}
}
\caption{Homogeneous  Problems over Finite-Horizon }
\label{f1}
\end{figure}

\section{Regularized Problems}\label{sec:rp}
In this section, we will work with  a sequence of regularized  problems, which are essential in our proof.  

By Theorem \ref{thm1}, we know that the finiteness leads to the convexity of the cost functional. To achieve the uniform convexity, we introduce the following regularized cost functional:
$$J_T^{0,\d}(s, x;u(\cd))=J_T^0(s, x;u(\cd))+\d\dbE\int_s^T|u(t)|^2dt.$$
 This allows us to consider the following regularized control problem:\ms

\noindent {\bf Problem (LQ)$^{0,\d}_{T}$:} for any $(s, x)\in\sD$, find a $\bar u^\d(\cd)\in \sU[s,T]$ such that 
\begin{equation*}
J^{\d}_{T}(s, x;\bar u^\d(\cd))=\inf_{u(\cd)\in \sU[s,T]}J^{\d}_{T}(s, x; u(\cd))=V^{\d}_{T}(s, x).
\end{equation*}

By Theorem \ref{thm1}, Problem (LQ)$^{0,\d}_{T}$ is closed-loop solvable with  the unique optimal  $L^2$-closed-loop strategy 
$$\bar \Th^\d(\cd)=-(R+D^\top {\bar P^\d} D)^{-1}(B^\top {\bar P^\d}+D^\top {\bar P^\d}C+S)\in \BU_0[0,T],$$
where $\bar P^\d(\cd)$ is the strongly regular solution to 
\begin{align}\label{31Riccatinot22}
\begin{cases}& \dot {\bar P}^\d+A^\top {\bar P^\d}+{\bar P^\d}A+C^\top {\bar P^\d} C+Q\\
&\q-(B^\top {\bar P^\d}+D^\top {\bar P^\d}C+S)^\top(R+\d I+D^\top {\bar P^\d} D)^{-1}(B^\top {\bar P^\d}+D^\top { {\bar P^\d}}C+S)=0,\\
&R+D^\top {\bar P^\d} D\geq \d I,~
{\bar P^\d}(T)=G.
\end{cases}
\end{align}

Now, we can show that the value of  Problem (LQ)$^0_{T}$ at any $(s, x)\in\sD$ can be approximated through the $L^2$-closed-loop strategies $\{\bar \Th_\d(\cd)\}$.

\begin{proposition} \label{31themfiniteasy}Suppose that Problem (LQ)$^0_{T}$  is finite. Then for any $(s, x)\in\sD$,
$$\lim_{\d\rightarrow 0^+} J_T^0(s, x; u^{\bar\Th^\d}(\cd))=\lim_{\d\rightarrow 0^+} V^{\d}_{T}(s, x)=V_T^0 (s, x).$$

\end{proposition}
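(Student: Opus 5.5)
Fix $(s,x)\in\sD$. The argument is a squeeze of $V_T^0(s,x)$ between $J_T^0(s,x;u^{\bar\Th^\d}(\cd))$ and $V^\d_T(s,x)$, using only monotonicity of $J^{0,\d}_T$ in $\d$ and the fact that $\bar\Th^\d$ is optimal for Problem (LQ)$^{0,\d}_T$ (Theorem \ref{thm1}(3)).

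\textbf{Lower bound.} For every $u(\cd)\in\sU[s,T]$ we have $J_T^{0,\d}(s,x;u(\cd))\ge J_T^0(s,x;u(\cd))\ge V_T^0(s,x)$. Taking the infimum over $u(\cd)$ gives
\begin{equation*}
V_T^0(s,x)\le V_T^{\d}(s,x).
\end{equation*}
Since $u^{\bar\Th^\d}(\cd)\in\sU[s,T]$ is an admissible control,
\begin{equation*}
V_T^0(s,x)\le J_T^0(s,x;u^{\bar\Th^\d}(\cd)) = V_T^\d(s,x)-\d\,\dbE\int_s^T|u^{\bar\Th^\d}(t)|^2dt\le V_T^\d(s,x).
\end{equation*}
The equality holds because $u^{\bar\Th^\d}$ is optimal for the regularized problem, so $J_T^{0,\d}(s,x;u^{\bar\Th^\d}(\cd))=V^\d_T(s,x)$.

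\textbf{Upper bound.} Note that $\d\mapsto V_T^\d(s,x)$ is nondecreasing, because $J_T^{0,\d}$ is pointwise nondecreasing in $\d$. Hence $\lim_{\d\to0^+}V_T^\d(s,x)$ exists and is at least $V_T^0(s,x)$. For the reverse inequality, fix $\e>0$. By finiteness, $V_T^0(s,x)>-\infty$, so there is a $u_\e(\cd)\in\sU[s,T]$ with $J_T^0(s,x;u_\e(\cd))\le V_T^0(s,x)+\e$. Then
\begin{equation*}
V_T^\d(s,x)\le J_T^{0,\d}(s,x;u_\e(\cd))=J_T^0(s,x;u_\e(\cd))+\d\,\dbE\int_s^T|u_\e(t)|^2dt\le V_T^0(s,x)+\e+\d\,\|u_\e\|^2_{L^2}.
\end{equation*}
Letting first $\d\to0^+$ and then $\e\to0^+$ yields
\begin{equation*}
\lim_{\d\to0^+}V_T^\d(s,x)=V_T^0(s,x).
\end{equation*}
The finiteness assumption is needed precisely to pick $u_\e$ near a finite infimum. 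If the problem were not finite, the squeeze would give $-\infty\le\cdots$, and the monotone limit could fail to match.

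\textbf{Conclusion.} The two-sided bound $V_T^0(s,x)\le J_T^0(s,x;u^{\bar\Th^\d}(\cd))\le V_T^\d(s,x)$ together with the limit of $V^\d_T$ gives both claimed limits. No step is a real obstacle. The only point needing care is justifying that $u^{\bar\Th^\d}$ is optimal for Problem (LQ)$^{0,\d}_T$ at every $(s,x)$, which is exactly Theorem \ref{thm1}(3). That theorem applies because finiteness gives convexity of $J^0_T(0,0;\cd)$ by Theorem \ref{thm1}(1), so $J^{0,\d}_T$ is uniformly convex.
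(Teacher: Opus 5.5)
Your proof is correct and is essentially the paper's argument: both pick a near-optimal control $u_\e(\cd)$ for Problem (LQ)$^0_T$ to get $V^0_T(s,x)\le V^\d_T(s,x)\le V^0_T(s,x)+\e+\d\,\dbE\int_s^T|u_\e(t)|^2dt$, then let $\d\to0^+$ and afterwards $\e\to0^+$. Your explicit sandwich $V^0_T(s,x)\le J^0_T(s,x;u^{\bar\Th^\d}(\cd))\le V^\d_T(s,x)$, which rests on Theorem \ref{thm1}(3) applied to the uniformly convex regularized problem, fills in the step the paper leaves implicit for the first limit.
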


\begin{proof}
At $(s, x)\in\sD$, suppose there exists a sequence $u_k(\cd)\in\sU[s,T]$ such that 
$$J_T^0(s, x; u_k(\cd))\leq  V_T^0(s, x)+\frac1k.$$
Then 
\begin{align*}&V^{\d}_{T}(s, x)\geq V_{T}(s, x)\geq J_T^0(s, x; u_k(\cd))-\frac1k\\
&\q\geq J_T^0(s, x; u_k(\cd))+\d\dbE\int_s^T|u_k(t)|^2dt-\Big(\d\dbE\int_s^T|u_k(t)|^2dt+\frac1k\Big)\\
&\q\geq V^{\d}_{T}(s, x)-\Big(\d\dbE\int_s^T|u_k(t)|^2dt+\frac1k\Big)
\end{align*}
Take $\d\rightarrow 0^+ $ and  $k\rightarrow\infty$ sequentially, it follows that 
$$V_{T}(s, x)\leq \lim_{\d\rightarrow 0^+} J_T^0(s, x; u^{\bar\Th^\d})= \lim_{\d\rightarrow 0^+} V^{\d}_{T}(s, x)=V_{T}(s, x).$$
The proof is complete.
\end{proof}
The above theorem unveils a way to identify a sequence of $L^2$-feedback strategies whose realizations are asymptotically optimal Problem (LQ)$_T^0$ at any initial.

If we further assume that Problem (LQ)$_T^0$ is open-loop solvable, we can identify one particular optimal control which admits the least $L^2$-norm among all possible optimal controls.

\begin{lemma} \label{31lemmmincontol}Suppose Problem (LQ)$_T^0$ is solvable at $(s, x)\in\sD$. Then there exists a $\d_k\downarrow 0$ such that  $\bar u^{\d_k}(\cd)\in\sU[s,T]$ admits a limit $\bar u^*(\cd)\in\sU[s,T]$ which is an optimal control Problem (LQ)$_T$. It also follows that 
$$\dbE\int_s^T |\bar u^*(t)|^2dt\leq \dbE\int_s^T |\bar u(t)|^2dt,$$
for any $\bar u(\cd)\in\sU[s,T]$ being optimal for Problem (LQ)$_T^0$ at $(s, x).$ Moreover, for each $(s, x)\in\sD$,
\begin{align}\label{31derivativePd}\lim_{\d\rightarrow0^+}\Big\lan \frac{ \bar P^\delta(s) -\bar P(s) }{\d}x,x\Big\ran=\dbE\int_s^T|\bar u^*(t)|^2dt.\end{align}
 
\end{lemma}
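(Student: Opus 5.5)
We follow the standard Tikhonov-regularization argument. Throughout, $\bar P(s)$ is read as the matrix representing $V_T^0(s,\cdot)$, i.e. $V_T^0(s,x)=\lan\bar P(s)x,x\ran$, while $V^\d_T(s,x)=\lan\bar P^\d(s)x,x\ran$ by Theorem \ref{thm1}(3).

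\emph{Step 1: uniform bound.} Fix $(s,x)$ and an optimal control $\bar u(\cd)$ of Problem (LQ)$_T^0$ at $(s,x)$. Since $\bar u^\d(\cd)$ minimizes $J_T^{0,\d}$ and $J_T^0(s,x;\bar u^\d)\ge V_T^0(s,x)=J_T^0(s,x;\bar u)$, we get
$$V_T^0(s,x)+\d\dbE\int_s^T|\bar u^\d|^2dt\le J_T^{0,\d}(s,x;\bar u^\d)\le J_T^{0,\d}(s,x;\bar u)=V_T^0(s,x)+\d\dbE\int_s^T|\bar u|^2dt.$$
Hence $\|\bar u^\d\|_{L^2}\le\|\bar u\|_{L^2}$ for every $\d>0$ and every optimal $\bar u$. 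We also obtain
$$0\le\frac{V^\d_T(s,x)-V_T^0(s,x)}{\d}\le\|\bar u\|^2_{L^2}.$$

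\emph{Step 2: weak limit and optimality.} By boundedness, choose $\d_k\downarrow0$ with $\bar u^{\d_k}\rightharpoonup\bar u^*$ weakly in $\sU[s,T]$. Finiteness and Theorem \ref{thm1}(1) give convexity of $u\mapsto J_T^0(s,0;u)$. Writing $J_T^0(s,x;u)$ as this convex quadratic form plus terms that are linear or constant in $u$ (all continuous), $J_T^0(s,x;\cdot)$ is convex and continuous, hence weakly lower semicontinuous. Proposition \ref{31themfiniteasy} gives $J_T^0(s,x;\bar u^{\d_k})\to V_T^0(s,x)$, so
$$J_T^0(s,x;\bar u^*)\le\liminf_k J_T^0(s,x;\bar u^{\d_k})=V_T^0(s,x),$$
and $\bar u^*$ is optimal. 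Weak lower semicontinuity of the norm then gives $\|\bar u^*\|\le\liminf\|\bar u^{\d_k}\|\le\|\bar u\|$ for every optimal $\bar u$, which is the minimal-norm property. The set of optimal controls is closed and convex (a sublevel set of a convex continuous functional), so its minimal-norm element is unique. Therefore the whole family converges weakly to $\bar u^*$. Moreover $\limsup\|\bar u^{\d}\|\le\|\bar u^*\|\le\liminf\|\bar u^\d\|$, so the norms converge and the convergence is in fact strong in $\sU[s,T]$, giving the claimed limit.

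\emph{Step 3: derivative formula.} Take $\bar u=\bar u^*$ in the upper bound of Step 1 to get
$$\frac{V^\d_T-V_T^0}{\d}\le\|\bar u^*\|^2.$$
For the lower bound, use $J_T^0(s,x;\bar u^\d)\ge V_T^0$:
$$\frac{V^\d_T-V_T^0}{\d}=\frac{J_T^0(s,x;\bar u^\d)-V_T^0}{\d}+\|\bar u^\d\|^2\ge\|\bar u^\d\|^2\to\|\bar u^*\|^2.$$
Since $V^\d_T(s,x)-V_T^0(s,x)=\lan(\bar P^\d(s)-\bar P(s))x,x\ran$, this yields \eqref{31derivativePd}.

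\emph{Main obstacle.} The only delicate point is the weak lower semicontinuity of $J_T^0(s,x;\cdot)$. It relies on convexity from finiteness via Theorem \ref{thm1}(1) together with continuity of the cost in $u$, which follows from the $S^2_\dbF$ estimates for \eqref{31SDE4-1} under the stated integrability of the coefficients.
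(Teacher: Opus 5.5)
Your proposal is correct and follows essentially the same route as the paper. Both arguments use the sandwich $\d\,\dbE\int_s^T|\bar u^\d|^2dt\le V^\d_T(s,x)-V_T^0(s,x)\le\d\,\dbE\int_s^T|\bar u|^2dt$, extract a weak limit along $\d_k$ that is optimal by weak lower semicontinuity of the convex continuous cost, upgrade to strong convergence via norm convergence, and reuse the sandwich with $\bar u=\bar u^*$ for \eqref{31derivativePd}. Your one addition is the uniqueness of the minimal-norm element of the closed convex set of optimal controls; it gives convergence of the whole family $\bar u^\d$ and so properly justifies the full limit $\d\to0^+$ in \eqref{31derivativePd}, a point the paper's proof, which only controls the subsequence $\d_k$, leaves implicit.
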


\begin{proof}
  Note that
\begin{align}\label{31PdP1}& \lan \bar P^\delta(s) x,x\ran-\lan \bar P(s) x,x\ran=V^{\d}(s, x)-V(s, x)\\
&\q=J_T^{0,\d}(s,  x; \bar u^{\d}(\cd))-J_T^0 (s, x; \bar u(\cd))\nonumber\\&\q
\leq  J_T^{0,\d}(s, x; \bar u(\cd))-J_T^0 (s, x; \bar u(\cd))=\delta\dbE \int_s^T|\bar u(t)|^2dt.\nonumber
\end{align}
and
\begin{align}\label{31PdP2}&\lan \bar P^\delta(s) x,x\ran-\lan \bar P(s) x,x\ran=V^{\d}(s, x)-V(s, x)\\
&\q=J_T^{0,\d}(s, x; \bar u^{\d}(\cd))-J_T^0 (s, x; \bar u(\cd))\nonumber\\
&\q\geq  J_T^{0,\d}(s, x; \bar u^\d(\cd))-J _T(s, x; \bar u^\delta(\cd))=\delta\dbE\int_s^T|\bar u^\delta(t)|^2dt.\nonumber
\end{align}
The above says that \begin{align}\label{31stronglimt1}\dbE\int_s^T|\bar u^\delta(t)|^2dt\leq \dbE \int_s^T|\bar u(t)|^2dt.\end{align}
Then there exists a $\d_k\downarrow 0$ such that $\{\bar u^{\delta_k}(\cd)\in \sU[s,T]\}$ admits a weak limit $\bar u^*(\cd)\in\sU[s,T]$ with 
\begin{align}\label{31stronglimt2}\dbE\int_s^T|\bar u^*(t)|^2dt\leq\liminf_{k\rightarrow\infty} \dbE\int_s^T|\bar u^{\delta_k}(t)|^2dt\leq \dbE \int_s^T|\bar u(t)|^2dt.\end{align}

By Proposition \ref{31themfiniteasy}, we know that $\{\bar u^{\delta_k}(\cd)\in \sU[s,T]\}$ is an asymptotic optimal sequence of controls for Problem (LQ)$_T$ at $(s, x)$. Since $J_T^0(s, x;\cd)$ is a convex and continuous functional in $\sU[s,T]$, the weak limit $\bar u^*(\cd)\in\sU[s,T]$ is also optimal. Combing \eqref{31stronglimt1} and \eqref{31stronglimt2}, it follows that 
$$\liminf_{k\rightarrow\infty} \dbE\int_s^T|\bar u^{\delta_k}(t)|^2dt=\dbE\int_s^T|\bar u^*(t)|^2dt.$$
Therefore, $\bar u^*(\cd)$ is also the strong limit of $\{\bar u^{\delta_k}(\cd)\}$.

Finally, by \eqref{31PdP1} and \eqref{31PdP2}, \eqref{31derivativePd} holds by taking $\bar u(\cd)=\bar u^*(\cd).$ The proof is complete.
\end{proof}

\begin{remark}\label{31remkustar}
It has been shown that $\bar u(\cd)\in\sU[s,T]$. In fact, we can further refine the measurability of the optimal control process. For $0\leq s<t\leq T$, define 
$$\cG^{s}_t:=\sigma\{W(r)-W(s):s\leq r\leq t\} \vee\cN.$$
where $\cN$ stands for the collection of all $\dbP$-null sets. 
Write the filtration 
$$\dbG^{s}:=\{\cG^{s}_{t}:t\geq s\}$$
Then for each $(s, x)\in\sD$, the optimal control  $\bar u^*(\cd)\in L^2_{\dbG^{s}}(s,T;\dbR^m)$.
\end{remark}

\section{The Bellman Principle and Range Condition}\label{sec:bp}

 In this section, we will prove the Bellman principle and a so-called range condition if Problem (LQ)$_T$  is finite.
 Such a result is essential to the proof  of generalized closed-loop solvability in the next section. 
 
The Bellman principle is not immediate in our setting, since our formulation allows only deterministic initial time–state pairs and defines the cost functional through an unconditional expectation rather than a conditional expectation. For the homogeneous problems studied in this paper, these two formulations are essentially equivalent by virtue of Remark \ref{31remkustar}. We adopt the former to maintain consistency with \cite{Sun-Li-Yong-2016}, but this choice requires an independent proof of the Bellman principle. The range condition plays an important role in passing to the limit from the Problem (LQ)$^{0,\d}_T$ to Problem (LQ)$^-_T$.

 We proceed with a sequence of lemmas. The first lemma concerns with the Bellman principle, which plays a fundamental role below.

\begin{lemma}[Bellman Principle]\label{33lemmabell}Suppose  Problem (LQ)$ ^0_T$ is finite with 
$V_T^0 (s, x)=\lan \bar P(s)x,x\ran.$ Then  for any $(s, x)\in \sD$ and $s_1\in[s,T]$, 
\begin{align*}&V_T^0 (s, x)=\inf_{u(\cd)\in\sU[s,s_1]}\dbE_{s, x}\Big(\int_s^{s_1}f(t,X(t),u(t))dt+V_T^0 (s_1,X(s_1))\Big).
\end{align*}
Consequently, $\bar P(\cd)$ is uniformly bounded. Here $$f(t,x,u):=\lan Q(t)x,x\ran +2\lan S(t)x,u\ran+\lan R(t)u,u\ran.$$
\end{lemma}
\begin{proof} Consider  Problem (LQ)$^{\d}_T$ whose cost functional is uniform convex.  Theorem \ref{thm1} implies that    
\begin{align*}&V_T^{0,\d}(s, x)=\inf_{u(\cd)\in\sU[s,s_1]}\dbE_{s, x}\Big(\int_s^{s_1}f(t,X,u)+\d\lan u,u\ran dt+V_T^{0,\d}(s_1,X(s_1))\Big)\\
&\q\geq \inf_{u(\cd)\in\sU[s,s_1]}\dbE_{s, x}\Big(\int_s^{s_1}f(t,X,u)dt+V_T^0(s_1,X(s_1))\Big).
\end{align*}
Take $\d\rightarrow 0^+,$ Proposition \ref{31themfiniteasy} yields that 
\begin{align*}&V_T^0 (s, x)=\lim_{\d\rightarrow 0^+}V_T^{0,\d}(s, x)\geq \inf_{u(\cd)\in\sU[s,s_1]}\dbE_{s, x}\Big(\int_s^{s_1}f(t,X,u)dt+V_T^0 (s_1,X(s_1))\Big).
\end{align*}

By Lemma \ref{31lemmmincontol} and Remark \ref{31remkustar}, for any $(s_1, x),$ there exists $\bar u^\e(\cd; s_1, x)\in\sU[s_1,T]\cap L^2_{\dbG^{s_1}}(s,T;\dbR^m)$ such that
\begin{align*}&J_T^0 (s_1, x;\bar u^\e(\cd; s_1, x))\leq V_T^0 (s_1, x)+\e|x|^2 \text{ and }\dbE\int_{s_1}^T|u^\e(t; s_1, x)|^2dt\leq C_\e |x|^2.
\end{align*}
Here $C_\e$ is a constant independent of $x.$
Then \begin{align*}\dbE\int_{s_1}^T|\bar u^\e(t; s_1,   X(s_1))|^2dt=C_\e
\dbE|X(s_1)|^2<\infty.\end{align*}
This says that $\bar u^\e(\cd; s_1,   X(s_1))\in\sU[s_1,T]$. By the definition of $V_T^0 (s_1, x)$ as the value function, it follows that 
\begin{align*}&V_T^0(s, x)\leq  \dbE_{s, x}\Big(\int_s^{s_1}f(t,X,u) dt+J_T^0 (s_1,    X(s_1);\bar u^\e(\cd; s_1,    X(s_1)))\Big)\\
&\q\leq  \dbE_{s, x}\Big(\int_s^{s_1}f(t,X,u) dt+V_T^0 (s_1,    X(s_1))+\e|X(s_1)|^2 \Big).
\end{align*}
for any $u(\cd)\in\sU[s,s_1].$  Let $\e\rightarrow 0^+$, the Bellman principle   holds.

It is evident that $P(\cd )$ is bounded above through the application of 0-control. Now let us show it is bounded below by contradiction. Suppose there exists a sequence of $\{s_k\}$ and $x$  such that $x^\top P(s_k )x\rightarrow\infty.$  

Let 
 $X(s_k):=X(s_k;0,x;0)=\Phi(s_k) x$,  where 
$$d\Phi=A  \Phi dt+C   \Phi dW\text{ with }\Phi(0)=I.$$
It follows that
\begin{align}\label{31bbounde below}\dbE[\Phi(t)^\top \Phi(t)]\geq \beta I,\text{ for some }\beta>0.\end{align}

Now applying  the Bellman principle,  it follows that 
\begin{align*}&V_T^0 (0, x)\leq \dbE_{0,x}\Big(\int_0^{s_k}f(t,X,0)dt+V_T^0 (s_k,   X(s_k))\Big)\\
&\leq \dbE_{0,x} \int_0^T|f(t,X,0)|dt+\dbE \lan \bar P(s_k) \Phi(s_k)x,\Phi(s_k)x\ran.
\end{align*}
By  \eqref{31bbounde below}, it is necessary that $V_T^0 (0, x)=-\infty$ for some $x\in\dbR^n$, which contradicts to the finiteness of Problem (LQ)$_T^0$. The proof is complete.
\end{proof}

Then let us derive some regularity results of the value function with respect to time.

\begin{lemma} Suppose that  Problem (LQ)$ ^0_T$ is finite with 
$V_T^0 (s, x)=\lan \bar P(s)x,x\ran.$ Then   $R+D^\top \bar P D\geq 0$ a.s.-$dt$. Moreover  $P(\cd )$ is right-continuous and is differentiable a.s.-$dt$.
\end{lemma}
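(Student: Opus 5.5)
We prove the three assertions in turn: $R+D^\top\bar PD\geq0$, right-continuity of $\bar P(\cd)$, and a.e.\ differentiability. Throughout we use the finiteness hypothesis, the boundedness of $\bar P(\cd)$ from Lemma \ref{33lemmabell}, and the regularized Riccati solutions $\bar P^\d$ of \eqref{31Riccatinot22}.

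\textbf{Step 1: Monotone structure in $\d$.} By Proposition \ref{31themfiniteasy}, $\lan\bar P^\d(s)x,x\ran=V^{0,\d}_T(s,x)\downarrow V^0_T(s,x)=\lan\bar P(s)x,x\ran$ as $\d\downarrow0$, for every $s$. The maps $\d\mapsto\bar P^\d(s)$ are nondecreasing in $\d$, since $J^{0,\d}_T$ is nondecreasing in $\d$. Hence $\bar P^\d(s)\to\bar P(s)$ pointwise in $s$, with $\bar P(s)\leq\bar P^\d(s)\leq\bar P^1(s)$.

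\textbf{Step 2: The inequality $R+D^\top\bar PD\geq0$.} For each $\d$ we have $R+\d I+D^\top\bar P^\d D\geq\d I>0$ a.e. Passing to the pointwise limit $\d\downarrow0$ at every $t$ outside a null set (countably many $\d_k$ suffice) gives $R+D^\top\bar PD\geq0$ a.e. If one prefers to argue only from $\bar P$ itself, we can instead use the Bellman principle of Lemma \ref{33lemmabell}. Starting from $(t,0)$ with the spike control $u=v\mathbf 1_{[t,t+h]}$, we get $X(t+h)\approx D(t)v\,(W(t+h)-W(t))$. Then $0\leq J^0_T(t,0;u)=h\lan(R+D^\top\bar P(t+h)D)v,v\ran+o(h)$ at Lebesgue points of $R,D$. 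This requires right-continuity, so the limit route through $\bar P^\d$ is cleaner and we take it first.

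\textbf{Step 3: Right-continuity.} Apply the Bellman principle on $[s,s+h]$ with $u=0$. This yields
\[
\lan\bar P(s)x,x\ran\leq\int_s^{s+h}\dbE\lan Q\Phi x,\Phi x\ran\,dt+\dbE\lan\bar P(s+h)X(s+h),X(s+h)\ran .
\]
Here $X(s+h)=x+O(\sqrt{\textstyle\int_s^{s+h}(|A|+|C|^2)})$ in $L^2$, and $\bar P$ is bounded, so $\liminf_{h\downarrow0}\bar P(s+h)\geq\bar P(s)$ in the quadratic-form sense. For the reverse inequality, note that $t\mapsto\lan\bar P^\d(t)x,x\ran$ is continuous and $\bar P=\inf_\d\bar P^\d$. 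Hence $\lan\bar P(\cd)x,x\ran$ is upper semicontinuous, giving $\limsup_{h\downarrow0}\bar P(s+h)\leq\bar P(s)$. Together these give right-continuity. The same upper semicontinuity yields $\bar P(s^-)\leq\bar P(s)$ wherever left limits exist.

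\textbf{Step 4: Differentiability a.e.} This is the main obstacle. Our plan is to show that $\bar P$ is locally of bounded variation, then invoke Lebesgue's theorem for BV functions. Write $\Psi(t)$ for the fundamental solution of the uncontrolled state equation. The Bellman inequality with $u=0$ gives that
\[
t\mapsto\dbE[\Psi(t)^\top\bar P(t)\Psi(t)]+\int_0^t\dbE[\Psi^\top Q\Psi]\,dr
\]
is nondecreasing in $t$ as a quadratic form, since $V$ is at most the cost of the zero control on $[s,t]$ plus $V$ at $t$. The map $t\mapsto\dbE[\Psi(t)^\top M\Psi(t)]$ is an absolutely continuous linear flow acting on $\dbS^n$, invertible because $A,C$ are integrable. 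So $\bar P(t)$ is the image of a monotone (hence BV) matrix function minus an AC function, under a time-dependent invertible linear map with AC coefficients. Therefore $\bar P$ is BV, and hence differentiable a.e. The delicate points are making the operator-level manipulation rigorous and confirming that the monotonicity holds for all $x$ simultaneously. The latter follows because it holds for each fixed $x$, and countably many $x$ span $\dbS^n$ by polarization.
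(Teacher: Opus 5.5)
Your proof is correct. Step 2, and the lower half of Step 3 (the Bellman principle with zero control, giving $\lan\bar P(s)x,x\ran\le\liminf_{h\downarrow0}\lan\bar P(s+h)x,x\ran$), match the paper. The other two pieces take different routes.

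\textbf{Upper half of right-continuity.} The paper fixes a $\d$-optimal control $u^\d$ at $(s,x)$. It bounds the continuation cost of $u^\d$ from time $s+\e$ below by $\dbE\,V_T^0(s+\e,X^\d(s+\e))$, then lets $\e\to0^+$ with $u^\d$ frozen. That lower bound is a Bellman-type inequality at a random initial state, which needs separate justification in the deterministic-initial formulation. You instead observe that $\lan\bar P(\cd)x,x\ran=\inf_{\d>0}\lan\bar P^\d(\cd)x,x\ran$ is an infimum of continuous functions, hence upper semicontinuous. This avoids the random-state issue entirely, and it gives the inequality $\bar P(s^-)\le\bar P(s)$ of Corollary \ref{leftlimitP} for free.

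\textbf{Differentiability.} The paper stays local. It applies It\^o's formula to $\lan\bar P(s+h)X(r),X(r)\ran$ with the frozen matrix $\bar P(s+h)$, and uses boundedness of $\bar P$ to get $\bar P(s)-\bar P(s+h)\le CI\int_s^{s+h}\ell(r)dr$. So $\bar P(t)+CI\int_0^t\ell(r)dr$ is nondecreasing, and nothing has to be inverted. Your transport to time $0$ via $\mathcal L_t(M)=\dbE[\Psi(t)^\top M\Psi(t)]$ also works, but three facts should be written out:
\begin{enumerate}
\item The monotonicity uses the Bellman inequality at the random state $\Psi(s)y$. This follows from the deterministic version because $\Psi(t)\Psi(s)^{-1}$ is independent of $\cF_s$. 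Equivalently, $\mathcal L_{0,t}=\mathcal L_{0,s}\circ\mathcal L_{s,t}$, and $\mathcal L_{0,s}$ preserves the order on $\dbS^n$.
\item $\mathcal L_t$ is invertible and $t\mapsto\mathcal L_t^{-1}$ is absolutely continuous. This holds because $\mathcal L_t$ is the trace-adjoint of the solution operator of $\dot\Sigma=A\Sigma+\Sigma A^\top+C\Sigma C^\top$, a linear ODE with $L^1$ coefficients.
\item Products of bounded BV functions are BV.
\end{enumerate}

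In short, your semicontinuity argument is the cleaner route to right-continuity, while the paper's frozen-coefficient estimate is the more economical route to bounded variation. One minor slip: in Step 3 the running cost should use the zero-control state started at $(s,x)$, i.e.\ $\Phi(t)\Phi(s)^{-1}x$, not $\Phi x$.
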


\begin{proof}

Because $\bar P^\d\rightarrow P$ and $R+D^\top\bar  P^\d D\geq \d I$,  we have $R+D^\top \bar P D\geq 0$, a.s.-$dt$. Now let us show that $\bar P(s)$ is right-continuous.

Fix $s<T$  and $x\in\mathbb R^n$.
For any fixed $u(\cdot)\in\sU[s,T]$, let
$X^u(\cdot)=X(\cdot;s ,x;u(\cdot))$.
The standard estimates imply
$$
\mathbb E_{s,x }\sup_{s\leq t\leq s+\e}|X^u(t)-x|^2\longrightarrow0
\qquad\text{as }\e\rightarrow 0^+.
$$
Using the quadratic representation of the value function and
the boundedness of $P$, we obtain
\begin{align}
&|\dbE_{s,x }V_T^0 (s+\e,X^u(s+\e))-V_T^0 (s+\e ,x)| \nonumber\\
&\q\leq
K\dbE_{s,x }\Big(\big(|X^u(s+\e)|+|x|\big)\cdot|X^u(s+\e)-x|
\Big)\longrightarrow0.
\label{eq:RC-value-state}
\end{align}
Moreover, it is evident that 
\begin{equation}
\dbE_{s,x}\int_s^{s+\e}|f(t,X^u(t),u(t))|dt\longrightarrow0
\qq\text{as }\e\rightarrow 0^+.
\label{eq:RC-running-cost}
\end{equation}

Applying the Bellman  principle with zero control
on $[s,s+\e]$, we have
$$V_T^0 (s ,x)\leq
\dbE_{s,x }\Big[\int_s^{s+\e} f(t,X(t),0)dt+V_T^0(s+\e,X(s+\e))\Big].$$
By \eqref{eq:RC-value-state} and \eqref{eq:RC-running-cost},
\begin{equation}
V_T^0 (s ,x)
\le\liminf_{\e\rightarrow 0^+}V_T^0 (s+\e ,x).
\label{eq:RC-liminf}
\end{equation}

Conversely, fix $\varepsilon>0$, by finiteness of the problem,
there exists $u^\d(\cdot)\in\sU[s,T]$ such that
$$J_T^0 (s ,x;u^\d(\cdot))\leq V_T^0 (s ,x)+\d.$$
Let $X^\d(\cd)$ be the corresponding state.
Since the conditional continuation cost is bounded below by
the continuation value, we obtain
$$
\begin{aligned}
&V_T^0 (s ,x)+\d
\geq J_T^0 (s ,x;u^\d(\cdot))\geq
\dbE_{s,x }\Big[\int_s^{s+\e}f(t,X^\d(t),u^\d(t))dt+V_T^0 (s+\e,X^\d(s+\e))
\Big].
\end{aligned}
$$
Note that the selected control $u^\d(\cd)$ is fixed as $\e\rightarrow 0^+$.
Thus \eqref{eq:RC-value-state} and \eqref{eq:RC-running-cost} imply
$$
\limsup_{\e\rightarrow 0^+}V_T^0 (s+\e ,x)
\le V_T^0 (s ,x)+\d.
$$
Together with \eqref{eq:RC-liminf}, the arbitrariness of $\d>0$ implies that
$$
\lim_{\e\rightarrow 0^+}V_T^0 (s+\e ,x)=V_T^0 (s ,x).
$$
This is the right-continuity of $P(\cd )$.

Finally, let us show that $P(\cd )$ is differentiable a.s.-$dt.$ Taking the 0-control, applying the Bellman Principle, we have 
\begin{align}
x^\top P(s )x
\leq \dbE_{s,x }\Big[
\int_s^{s+h} \lan Q X,X\ran d r
+\lan \bar  P  (s+h)X(s+h),X(s+h)\ran
\Big].
\label{eq:BV-DPP}
\end{align}
Applying It\^o's formula to
$
r\longmapsto \lan \bar   P(s+h)X(r),X(r)\ran,
$
we have
\begin{align*}
&\dbE_{s,x }\lan \bar  P  (s+h)X(s+h),X(s+h)\ran
-x^\top\bar  P(s+h )x=
\dbE_{s,x }\int_s^{s+h}
X(r)^\top K(r,s+h)X(r)dr,
\end{align*}
where $
K(r ,s+h)
=\bar P(s+h )A(r)+A^\top(r) P(s+h )+C^\top(r) P(s+h )C(r).$

Combining this identity with \eqref{eq:BV-DPP}, we obtain
\begin{align}
&x^\top\big[\bar P(s )-\bar P(s+h )\big]x \leq
\dbE_{s,x }\int_s^{s+h}
X(r)^\top
\big[K(r,s+h)+Q  (r)\big]
X(r)dr.
\label{eq:BV-increment}
\end{align}
Set
$$
\ell(r)
:=
1+
|A(r)|+|C(r)|^2+|Q(r)|.
$$
By the coefficient assumptions, $\ell(\cd)\in L^1(0,T)$.
The boundedness of $P(\cd )$ and the finiteness of the regime space imply
$$
\bar P(s )-\bar P(s+h )
\leq CI\int_s^{s+h} \ell(r)dr.
$$
Therefore, the matrix-valued function
$$
F(t ):=\bar P(t )+CI\int_0^t \ell(r)dr
$$
is non-decreasing on $[0,T]$. For every fixed $z\in\mathbb R^n$, the function
$t\mapsto z^\top F(t )z$ is non-decreasing and bounded, and hence
has bounded variation. Consequently, it is differentiable a.s.-$dt$.
\end{proof}

\begin{corollary}\label{leftlimitP}
Suppose that  Problem (LQ)$ ^0_T$ is finite with 
$V_T^0 (s, x)=\lan \bar P(s)x,x\ran.$ Then for any $s\in (0,T],$ there exists a $$\bar P(s^-)=\lim_{t\uparrow s}\bar P(t)\leq \bar P(s).$$
\end{corollary}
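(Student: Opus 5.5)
The plan is to get the existence of $\bar P(s^-)$ from the monotonicity argument at the end of the previous lemma, and then compare $\bar P(t)$ with $\bar P(s)$ through the Bellman principle, Lemma \ref{33lemmabell}, using the zero control on the short interval $[t,s]$.

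\emph{Existence of the left limit.} The previous proof gives a constant $C$ and $\ell(\cd)\in L^1(0,T)$ such that
$$F(t)=\bar P(t)+CI\int_0^t\ell(r)dr$$
is non-decreasing in the Loewner order and bounded. For each $z\in\dbR^n$, the scalar map $t\mapsto z^\top F(t)z$ is monotone and bounded, so it has a left limit at every $s\in(0,T]$. Polarization, $z^\top Fy=\frac14[(z+y)^\top F(z+y)-(z-y)^\top F(z-y)]$, then gives convergence of every entry, so $F(s^-)$ exists. Since $\int_0^t\ell\,dr$ is continuous in $t$, the limit $\bar P(s^-)=\lim_{t\uparrow s}\bar P(t)$ exists.

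\emph{The inequality.} The monotonicity of $F$ gives only $\bar P(s^-)\le \bar P(s)$ up to the vanishing term $CI\int_t^s\ell$, and that term tends to $0$. To make this explicit:

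1. Fix $x$ and $t<s$. Apply the Bellman principle from $(t,x)$ up to time $s$ with $u=0$ on $[t,s]$:
$$x^\top\bar P(t)x\le \dbE_{t,x}\Big[\int_t^s\lan QX,X\ran dr+\lan\bar P(s)X(s),X(s)\ran\Big].$$
2. Under $u=0$, standard SDE estimates give $\dbE\sup_{t\le r\le s}|X(r)-x|^2\le K|x|^2\int_t^s(|A|+|C|^2)dr\to0$ as $t\uparrow s$. Since $Q\in L^1$, the running-cost term tends to $0$.
3. Since $\bar P(s)$ is a fixed matrix, $\dbE\lan\bar P(s)X(s),X(s)\ran\to x^\top\bar P(s)x$.

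Letting $t\uparrow s$ in step 1 yields $x^\top\bar P(s^-)x\le x^\top\bar P(s)x$ for all $x$, that is, $\bar P(s^-)\le\bar P(s)$.

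\emph{Main obstacle.} The only delicate point is in step 2: the moment bound must be uniform as the initial time $t$ moves toward $s$, with a fixed endpoint $s$. This follows because the moment estimates depend only on $\int_t^s(|A|+|C|^2)dr$, which tends to $0$ by absolute continuity of the integral.
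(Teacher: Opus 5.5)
Your proposal is correct and follows the paper's argument. The paper also observes only that $F(t)=\bar P(t)+CI\int_0^t\ell(r)dr$ is non-decreasing with $\ell\in L^1(0,T)$, which gives the existence of $\bar P(s^-)$ and, through $\bar P(t)\le\bar P(s)+CI\int_t^s\ell(r)dr$, the inequality $\bar P(s^-)\le\bar P(s)$; your explicit zero-control Bellman check on $[t,s]$ is valid but redundant, since it re-derives the increment estimate from the preceding lemma that gave the monotonicity of $F$.
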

\begin{proof} Note that $F(t)=\bar P(t )+CI\int_0^t \ell(r)dr$ is non-decreasing and $\ell\in L^1(0,T)$. The result holds directly.
\end{proof}

Now we are ready to show that the range condition holds if Problem (LQ)$ ^0_T$ is merely finite.

\begin{theorem}\label{31propImsubIm} Suppose that  Problem (LQ)$ ^0_T$ is finite with
$V_T^0 (s, x)=\lan\bar P(s)x,x\ran.$ 
 The range condition holds, i.e. $$\Im[B^\top {\bar P}+D^\top {{\bar P}}C+S]\subseteq \Im[R+D^\top {\bar P} D] \text{ a.s.-}dt.$$

\end{theorem}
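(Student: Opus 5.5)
Our plan is to pass to the limit in the regularized Riccati equations \eqref{31Riccatinot22}, using the monotonicity $\bar P^\d\downarrow\bar P$ and the finiteness of the integrated quadratic term. Write
$$\mathcal R^\d:=R+\d I+D^\top\bar P^\d D,\qquad \mathcal S^\d:=B^\top\bar P^\d+D^\top\bar P^\d C+S,$$
with $\mathcal R,\mathcal S$ the analogous quantities built from $\bar P$. By Proposition \ref{31themfiniteasy} and \eqref{31PdP1}--\eqref{31PdP2}, $\bar P^\d(t)\downarrow\bar P(t)$ pointwise as $\d\downarrow0$. By Lemma \ref{33lemmabell} together with the monotonicity, the family $\{\bar P^\d\}$ is uniformly bounded for $\d\in(0,1]$. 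Hence $\mathcal S^\d\to\mathcal S$ and $\mathcal R^\d\to\mathcal R$ a.e.

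The key step is a uniform integral bound on the quadratic term. Integrate \eqref{31Riccatinot22} over $[s,T]$:
$$\int_s^T(\mathcal S^\d)^\top(\mathcal R^\d)^{-1}\mathcal S^\d\,dt=G-\bar P^\d(s)+\int_s^T\big(A^\top\bar P^\d+\bar P^\d A+C^\top\bar P^\d C+Q\big)dt.$$
The right side is bounded uniformly in $\d$ by the boundedness of $\bar P^\d$ and the integrability assumptions $A,Q\in L^1$, $C\in L^2$. The integrand on the left is nonnegative. Fatou's lemma then gives
$$\liminf_{\d\to0}\,(\mathcal S^\d)^\top(\mathcal R^\d)^{-1}\mathcal S^\d<\infty\quad\text{a.e. } t.$$

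Now fix a $t$ in this full-measure set where the convergences also hold, and take $v\in\Ker\mathcal R(t)$. The claim $\Im\mathcal S\subseteq\Im\mathcal R$ is equivalent to $\Ker\mathcal R\subseteq\Ker\mathcal S^\top$, so it suffices to show $\mathcal S(t)^\top v=0$. For any $x$, the Cauchy--Schwarz inequality in the $(\mathcal R^\d)^{-1}$ inner product gives
$$|\lan \mathcal S^\d x,v\ran|^2\le \lan(\mathcal R^\d)^{-1}\mathcal S^\d x,\mathcal S^\d x\ran\,\lan\mathcal R^\d v,v\ran.$$
As $\d\to0$ along the liminf subsequence, the first factor stays bounded. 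The second factor tends to $\lan\mathcal R v,v\ran=0$. Hence $\lan\mathcal S x,v\ran=0$ for all $x$, which gives $\mathcal S(t)^\top v=0$, as required.

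The main obstacle is the Fatou step: it produces a subsequence that may depend on $t$. This is harmless because the Cauchy--Schwarz argument is carried out pointwise in $t$, and the bound holds for every $x$ in a fixed basis simultaneously, since we bound the trace of the matrix. One should also check that $\bar P^\d(s)$ is bounded below uniformly, so that $-\bar P^\d(s)$ on the right side is controlled. This follows from $\bar P^\d\ge\bar P$, with $\bar P$ bounded by Lemma \ref{33lemmabell}, while the upper bound comes from $\bar P^\d\le\bar P^1$.
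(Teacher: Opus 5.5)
Your proof is correct, but it takes a genuinely different route from the paper.

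\textbf{The paper's route.} The paper works with $\bar P$ directly. It applies the Bellman principle on $[s,s+\e]$ with a constant control $u\equiv v$, uses It\^o's formula with $\bar P(s+\e)$ frozen, and divides by $\e$ at a point where $\bar P$ is differentiable. This gives the pointwise inequality \eqref{Riccine}. That inequality says the block matrix with diagonal blocks $\dot{\bar P}+A^\top\bar P+\bar PA+C^\top\bar PC+Q$ and $R+D^\top\bar PD$, and off-diagonal block $B^\top\bar P+D^\top\bar PC+S$, is positive semidefinite a.e. The range inclusion is then the standard property of a positive semidefinite block matrix. This route needs the a.e.\ differentiability (BV) lemma for $\bar P$. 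It also needs a Lebesgue-point passage $\e\to0^+$ with merely integrable coefficients, which the paper leaves implicit. In exchange it yields the full Riccati inequality $\dot{\bar P}+A^\top\bar P+\bar PA+C^\top\bar PC+Q-\mathcal S^\top\mathcal R^\dagger\mathcal S\ge0$ a.e.

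\textbf{Your route.} You never use the derivative of $\bar P$. You only need three things: $\bar P^\d\to\bar P$ pointwise, the uniform bound on $\bar P$ from Lemma \ref{33lemmabell}, and the integrated regularized Riccati equation. The last gives a uniform $L^1$ bound on the nonnegative term $(\mathcal S^\d)^\top(\mathcal R^\d)^{-1}\mathcal S^\d$. Fatou's lemma and the pointwise Cauchy--Schwarz bound $|\lan\mathcal S^\d x,v\ran|^2\le\lan(\mathcal R^\d)^{-1}\mathcal S^\d x,\mathcal S^\d x\ran\,\lan\mathcal R^\d v,v\ran$ then finish the argument. This is closer in spirit to the limiting procedure used later in Section \ref{sec:mr}. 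It is also less sensitive to the low time-regularity of the coefficients.

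\textbf{Two small bookkeeping points.}
\begin{enumerate}
\item The paper derives \eqref{31PdP1}--\eqref{31PdP2} under open-loop solvability, since they use an optimal $\bar u$. Under mere finiteness you should instead cite the monotonicity $\bar P\le\bar P^{\d}\le\bar P^{\d'}$ for $0<\d\le\d'$, which follows directly from $J_T^0\le J_T^{0,\d}\le J_T^{0,\d'}$. That is all you actually use.
\item Apply Fatou's lemma along a fixed sequence $\d_j\downarrow0$, to the trace as you indicate, so that measurability of the pointwise $\liminf$ is not an issue.
\end{enumerate}
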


\begin{proof} 

The 
Bellman principle  and It\^o's formula together implies that 
\begin{align}
&\lan \bar P(s) x,x\ran=\inf_{u(\cd)\in\sU[s,s+\e]}\dbE\Big(\int_s^{s+\e} f(t,X,u)dt+\lan \bar P(s+\e)X(s+\e),X(s+\e)\ran \Big)\nonumber\\
&\q=\lan \bar P(s+\e)x,x\ran+\inf_{u(\cd)\in\sU[s,s+\e]}\Big[\dbE\int_s^{s+\e}\Big\lan \Big(R  +D  ^\top \bar P(s+\e) D   \Big)u,u\Big\ran\nonumber\\
&\qq +2\dbE\int_s^{s+\e}\lan B  ^\top \bar P(s+\e) +C^\top   \bar P(s+\e) D  +S  )X,u\ran dt\Big] \nonumber\\
&\qq+\dbE\int_s^{s+\e} \Big\lan\Big(A  ^\top \bar P(s+\e)  +\bar P(s+\e) A  +C  ^\top \bar P(s+\e)C   +Q  \Big)X,X\Big\ran dt.\label{31dpp}
\end{align}
Note that $\bar P(\cd )$ is differentiable a.s.-$dt$.  For $s\in[0,T]$, where $\bar P(\cd )$ is differentiable,  taking $u(\cd)=v$ on $[s,s+\e]$ in \eqref{31dpp}, it follows that 
\begin{align} &0\leq \lan(\dot {\bar P}+A^\top \bar P+\bar P A+C^\top \bar P C+Q)x,x\ran\nonumber\\
&\q+2\lan (B^\top \bar P+D^\top \bar P C+S)x,v\ran +\lan (R+D^\top\bar  P D)v,v\ran,\label{Riccine}
\end{align}
for any $x\in\dbR^n$ and $v\in\dbR^m$. Then the range condition holds necessarily. The proof is complete.
\end{proof}


\section{Proof of Main Results}\label{sec:mr}

Now we are ready to prove  our main result.

\begin{proof}[Proof of  in  Theorem \ref{31thmclosedloopsolv}.]

(1). We only need to show the `if part'. Suppose  Problem (LQ)$_T^0$ is open-loop solvable. Consider  Problem (LQ)$_T^{0,\d}$ in Section \ref{sec:rp}. By Lemma \ref{31lemmmincontol}, 
let us define 
$$H(s):=\lim_{\d\rightarrow 0^+}\frac{\bar P^\d(s)-\bar P(s)}\d\text{ for each }s\in[0,T]. $$
We  notice that  $I+D^\top H D>0$ because $H\geq 0$. By \eqref{conRDPD}, the range condition  yields that \begin{align*}&\bar\Th_\d=-(R+\d I+ D^\top \bar P^\d D)^{-1}(B^\top \bar P^\d+D^\top \bar P^\d C+S)\rightarrow \bar \Th^\circ+\Phi=:\bar\Th^*, \text{ a.s.-}dt,\end{align*}
for some $\Phi(\cd)$ valued in $\dbR^{m\times n}$ with $[R+ D^\top \bar P D]\Phi=0$ a.s.-$dt$.

Now, 
we have obtained the  following \begin{align*}\begin{cases}&\bar\Th^\d(\cd)\rightarrow\bar\Th^*(\cd), \text{a.s.-$dt$},\\ 
&\ds\dbE\int_s^T|\bar u^\d(t)-\bar u^*(t)|^2dt\rightarrow 0,\\
&\ds\dbE\sup_{s\leq t\leq T}|X(t;s, x;\bar u^\d(\cd))-X(t;s, x;\bar u^*(\cd))|^2\rightarrow 0,\\
& \bar u^\d(\cd)=\bar \Th  ^\d(\cd) X(\cd;s, x;\bar u^\d(\cd))\text{ in }\sU[s,T].
\end{cases}
\end{align*}
Then there exists a subsequence $\{u^{\d_k}(\cd)\}$ admitting a pointwise limit $\bar u^*(\cd)$, a.s.-$dt\times d\dbP$. On the other hand,  we have the pointwise convergence $$\bar u^\d(\cd)=\bar \Th  ^\d(\cd) X(t;s, x;\bar u^\d(\cd))\rightarrow \bar \Th^*  (\cd) X(\cd;s, x;\bar u^*(\cd))\text{ a.s.-}dt\times d\dbP.$$
Therefore,
$\bar u^*(\cd)=\bar \Th^*  (\cd) X(\cd;s, x;\bar u^*(\cd))\text{ a.s.-}dt\times d\dbP.$
Provided  that $\bar u^*(\cd)\in\sU[s,T]$, $\bar \Th  (\cd) X(\cd;s, x;\bar u^*(\cd))\in\sU[s,T]$. \ms

(2). Provided Proposition \ref{proposition Thlocal}, for any $r\in[0,T)$, we know that $\bar\Th^*(\cd)\in L^2(r,r+\e_r;\dbR^{m\times n})$ for some $\e_r>0$. Then our results holds directly.

(3). If $P(s_k^-)=P(s_k)$, the result is trivial. Assume $P(s_k^-)\neq P(s_k)$.  Through the Bellman principle, for any $\e>0$, it follows that
\begin{align*}&V(s,x)=\dbE\Big(\int_s^{s_k-\e}f(t,\bar X,\bar u)dt+\lan P(s_k-\e) \bar X(s_k-\e), X(s_k-\e)\ran\Big)\\
&=\dbE\Big(\int_s^{s_k}f(t,\bar X,\bar u)dt+\lan P(s_k) \bar X(s_k), X(s_k)\ran\Big).\end{align*}
Take $\e\rightarrow 0^+$ and apply the continuity of $\bar X(\cd)$, we have 
$$\dbE\lan P(s_k^-) \bar X(s_k), X(s_k)\ran=\dbE\lan P(s_k-\e) \bar X(s_k), X(s_k-)\ran.$$
Since $P(s_k)\geq P(s_k^-)$ from Corollary \ref{leftlimitP}, our result holds.
\end{proof}

Until now, we have proven the equivalence of open-loop and generalized closed-loop solvability. We also derive a form of local wellposedness of the value function in time.  As we know that the value function may not be even continuous over time for merely open-loop solvable problems, the global well-posedness of DRE  on $[0,T]$ is not necessarily true.

In the rest of this section, we will derive the global well-posedness of  DRE \eqref{DRE}  from \eqref{31Riccatinot22} if some additional assumptions are provided.  Take $\d\rightarrow0^+$ in DRE \eqref{31Riccatinot22}. We notice that 
$$|A^\top {\bar P^\d}+{\bar P^\d}A+C^\top {\bar P^\d} C+Q|\leq K(1+|A|+|C|^2+|Q|).$$
Therefore, dominant convergence theorem yields that  $$A^\top {\bar P^\d}+{\bar P^\d}A+C^\top {\bar P^\d} C+Q\rightarrow A^\top {\bar P}+{\bar P}A+C^\top {\bar P} C+Q,~\text{ in }L^1(0,T;\dbS^n).$$ 
We also see that 
\begin{align*}&(\bar \Theta^\d)^\top(R+\d I+ D^\top \bar P^\d D)\bar \Theta^\d\longrightarrow\bar  \Th^*[R+ D^\top \bar PD]\bar \Th^*=\bar \Th^\circ[R+ D^\top \bar PD]\bar \Th^\circ,~\text{a.s.-}dt.\end{align*}

The gap for establishing the global DRE for $\bar P(\cd)$ lies in concluding the $L^1$-convergence  from the pointwise convergence, which requires the uniform  integrability of $$\Big\{(\bar \Theta^\d)^\top(R+\d I+ D^\top \bar P^\d D)\bar \Theta^\d:\d>0\Big\}.$$ While we notice that a version of local $L^2$-convergence always holds in the following proposition.
\begin{proposition} There exists a decomposition $\{[t_k,s_k):k=1,2\cdots\}$ of $[0,T)$.
On each $[t_k,s_k)$, for  any $\e\in(0, s_k-t_k)$, it follows that
$$\lim_{\d\rightarrow0^+}\int_{t_k}^{s_k-\e}|\bar\Th^\d(t)-\bar\Th^*(t)|^2dt=0.$$
\end{proposition}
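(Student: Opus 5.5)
We will use the decomposition $\{[t_k,s_k)\}$ of Theorem \ref{31thmclosedloopsolv}(2), built from the admissible strategy $\bar\Th^*(\cd)$ via Proposition \ref{proposition Thlocal}. On each $[t_k,s_k-\e]$ we have $\bar\Th^*\in L^2$, and we also know $\bar\Th^\d\to\bar\Th^*$ a.s.-$dt$. So the claim reduces to a Vitali-type argument: it suffices to show that $\{|\bar\Th^\d|^2:\d>0\}$ is uniformly integrable on $[t_k,s_k-\e]$. By compactness, as in the proof that $\Th$ is locally $L^2$ on $[t_k,s_k)$, it is enough to prove this on each small interval $[r,r+\e_r]$ from Proposition \ref{proposition Thlocal}, and then cover $[t_k,s_k-\e]$ by finitely many of them.

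The local uniform integrability will be obtained exactly as in the proof of Proposition \ref{proposition Thlocal}, but uniformly in $\d$. Fix $r$ and initial states $e_j$ at time $r$. Let $\cX^\d(\cd)$ be the fundamental matrix of the closed-loop system under $\bar\Th^\d$, and set $\cU^\d=\bar\Th^\d\cX^\d$. By Lemma \ref{31lemmmincontol} and the convergence displayed in the proof of Theorem \ref{31thmclosedloopsolv}(1), we have $\cU^\d\to\cU^*:=\bar\Th^*\cX^*$ strongly in $L^2_\dbF(r,T)$ and $\cX^\d\to\cX^*$ in $S^2_\dbF$. Here the strong convergence of the full family, not just a subsequence, follows from uniqueness of the minimal-norm optimal control.

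Choose a deterministic $\e_r$ such that $\dbP(E^\d)\ge 1/4$ for all small $\d$, where
$$E^\d=\Big\{\sup_{r\le t\le r+\e_r}\|\cX^\d(t)-I\|\le \tfrac12\Big\}.$$
This is possible because $\cX^\d\to\cX^*$ in $S^2_\dbF$ and $\cX^*$ is continuous at $r$. Then, for any measurable $A\subset[r,r+\e_r]$,
$$\tfrac14\int_A|\bar\Th^\d|^2dt\le 4\,\dbE\int_A\|\cU^\d\|^2dt .$$
Since $\{\|\cU^\d\|^2\}$ is uniformly integrable on $[r,T]\times\O$, being convergent in $L^1$, the right side is small uniformly in $\d$ when $|A|$ is small. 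This gives the required uniform integrability of $|\bar\Th^\d|^2$, and Vitali's theorem yields $\int_r^{r+\e_r}|\bar\Th^\d-\bar\Th^*|^2dt\to0$.

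The main obstacle is that $\e_r$ must be chosen uniformly in $\d$, and the decomposition must match the one in Theorem \ref{31thmclosedloopsolv}(2). The first point holds because $\sup_{[r,r+\e]}\|\cX^\d-I\|\le \sup\|\cX^\d-\cX^*\|+\sup_{[r,r+\e]}\|\cX^*-I\|$, and the first term is small in probability. For the second point, we simply redefine $\e_r$ as the one produced here, which also works for $\bar\Th^*$, and construct the decomposition from it exactly as before. A secondary subtlety is that the strong convergence in Lemma \ref{31lemmmincontol} is stated only along a sequence $\d_k$. If uniqueness of the minimal-norm limit is not available, we argue by subsequences: every sequence $\d\to0$ has a further subsequence along which the $L^2$ convergence holds, which gives convergence of the whole family.
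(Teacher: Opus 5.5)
Your proof is correct, but it takes a different route from the paper's. Your remark on full-family convergence is also sound: the optimal controls form a closed convex set, so the minimal-norm limit is unique.

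The paper never inverts $\cX^\d$ pathwise. It starts the fundamental matrices at $s=t_k$ and passes to the deterministic means $M^\d=\dbE\cX^\d$, $N^\d=\dbE\cU^\d$, with $M$, $N$ the corresponding means for the limit. Because $\bar\Th^\d$ is deterministic, $N^\d=\bar\Th^\d M^\d$ and $N=\bar\Th^*M$. Jensen's inequality then gives $\sup|M^\d-M|\to0$ and $\int|N^\d-N|^2dt\to0$. On $[t_k,s_k-\e]$ the matrix $M$ solves a linear ODE with coefficient $A+B\bar\Th^*\in L^1$ and $M(t_k)=I$, so it is invertible with bounded inverse. Hence $(M^\d)^{-1}\to M^{-1}$ uniformly, and $\bar\Th^\d-\bar\Th^*=N^\d(M^\d)^{-1}-NM^{-1}\to0$ in $L^2(t_k,s_k-\e)$ by direct algebra.

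Compared with your argument, the paper's works globally on each piece. It needs no $\d$-uniform radii $\e_r$, no covering and no Vitali theorem. From Theorem \ref{31thmclosedloopsolv}(1) it uses only the identity $\cU^*=\bar\Th^*\cX^*$. It also holds for any decomposition on which $\bar\Th^*$ is locally square integrable, such as the one in Theorem \ref{31thmclosedloopsolv}(2) as given. You instead have to rebuild the decomposition from your $\d$-uniform radii; that is legitimate, since the statement is existential.

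What your route gains is a by-product: local uniform integrability of $\{|\bar\Th^\d|^2\}$. Its global version is exactly the hypothesis of Proposition \ref{prop61}(2). Your argument therefore makes explicit that the singular times are precisely where this uniform integrability can fail.
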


\begin{proof}
Fix $s<T$. For each $j=1,\ldots,n$, let $u^{\delta,j}(\cd)$ be
the optimal control of the $\delta$-regularized homogeneous
problem with initial pair $(s,e_j)$, and let $X^{\delta,j}$
be its corresponding state.
$$
\bar u^{\delta,j}(\cd)=\bar \Theta^\delta(\cd)\bar X^{\delta,j}(\cd).
$$
By open-loop solvability and the convergence theorem for
regularized optimal controls, there exist optimal controls
$\bar u^j$ and corresponding states $\bar X^j$ such that
$$
\bar u^{\delta,j}(\cd)\longrightarrow\bar  u^j(\cd)
\q\text{in }\sU[s,T],
\q
\bar X^{\delta,j}(\cd)\longrightarrow \bar X^j(\cd)
\q\text{in }S^2_\dbF(s,T;\dbR^n).
$$

Define the matrix-valued processes
$$
\bar \cX^\delta
=(\bar X^{\delta,1},\cdots,\bar X^{\delta,n}),
\q
\cU^\delta
=(\bar u^{\delta,1},\cdots,\bar u^{\delta,n}),
$$
and define $\bar \cX^*,\bar \cU^*$ analogously using the
limiting states and controls. Then
$$
\bar \cU^\delta=\bar \Theta^\delta\bar \cX^\delta\text{ and } \bar \cU^*=\bar \Theta^*\bar \cX^*\text{ with }\bar \cX^\delta(s)=\bar \cX^*(s)=I_n.
$$
Moreover, it follows that 
$$\dbE\sup_{t\geq s}|\bar \cX^\d(t)-\bar \cX^*(t)|^2\rightarrow0\text{ and }\dbE\int_s^T|\bar \cU^\d(t)-\bar \cU^*(t)|^2dt\rightarrow0.$$

Set
$$
M^\delta(t)=\dbE\bar \cX^\delta(t),\q N^\delta(t)=\dbE\bar \cU^\delta(t),\q M(t)=\dbE\bar \cX^*(t),\q N(t)=\dbE\bar \cU^*(t).
$$
Since $\bar \Theta^\delta$ is deterministic,
$$
N^\delta(t)=\bar \Theta^\delta(t)M^\delta(t),\q N(t)=\bar \Theta^* M(t)
\quad\text{a.s.-}dt.
$$
Moreover, Jensen's inequality implies
$$
\sup_{t\in[s,T]}|M^\delta(t)-M(t)|^2
\leq
\dbE\sup_{t\in[s,T]}
|\mathcal X^\delta(t)-\mathcal X(t)|^2
\longrightarrow0
$$
and
$$
\int_s^T|N^\delta(t)-N(t)|^2dt
\leq
\dbE\int_s^T
|\mathcal U^\delta(t)-\mathcal U(t)|^2dt
\longrightarrow0.
$$
Fix any $\e\in(0,s_k-t_k)$. 
Given that $\bar\Th(\cd)\in L^2(t_k,s_k-\e;\dbR^{m\times n})$, we have $$\sup_{t\in[t_k,s_k-\e]}|M^{-1}(t)|\leq C_\e.$$
By the uniform convergence of $M^\delta(\cd)$ to $M(\cd)$, for all sufficiently
small $\delta$,
$$\sup_{t\in[t_k,s_k-\e]}| (M^\delta(t))^{-1}|\leq 2C_\e.$$
Consequently, as $\d\rightarrow 0^+$,
$$
\sup_{t\in[t_k,s_k-\e]}
|(M^\delta(t))^{-1}-M(t)^{-1}|=\sup_{t\in[t_k,s_k-\e]}|(M^\delta)^{-1}(M-M^\delta)M^{-1}|
\longrightarrow0.
$$

Since $N(\cd)\in L^2(t_k,s_k-\e;\dbR^{m\times n})$ and $M^{-1}(\cd)$  is
bounded on $[t_k,s_k-\e]$, $\bar \Theta^*\in L^2(t_k,s_k-\e;\dbR^{m\times n})$.
Hence,  as $\d\rightarrow 0^+$,
\begin{align*}
&\int_{t_k}^{s_k-\e}|\bar \Theta^\delta-\bar \Theta^*|^2dt
\leq
2\int_{t_k}^{s_k-\e}| N^\delta-N|^2dt\\
&\q+
\int_{t_k}^{s_k-\e}| N|^2dt
\sup_{t\in[t_k,s_k-\e]}
|(M^\delta(t))^{-1}-M(t)^{-1}|^2
\longrightarrow0.
\end{align*}
The proof is complete.
\end{proof}

To achieve the global convergence on $[0,T]$, the following proposition provides the exact path how generalized closed-loop solvability refines to closed-loop solvability.

\begin{proposition}\label{prop61} Suppose Problem (LQ)$_T^0$ is open-loop solvable with $V_T^0 (s, x)=\lan \bar P(s)x,x\ran.$

{\rm (1).} If $\{(\bar \Theta^\d)^\top(R+\d I+ D^\top \bar P^\d D)\bar \Theta^\d\}$ is uniform integrable on $[0,T]$,  then $\bar P(\cd)$ is the regular solution of  \eqref{31Riccatinot22}.

  {\rm (2).} If $\{|\bar \Theta^\d|^2\}$ is uniformly integrable on $[0,T]$, Problem (LQ)$_T^0$ is closed-loop solvable and  $\bar\Th^*(\cd)$ is   an $L^2$-optimal closed-loop strategy.

\end{proposition}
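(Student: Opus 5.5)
We pass to the limit $\d\to0^+$ in the integrated form of \eqref{31Riccatinot22}. For each $\d>0$ and $t\in[0,T]$, since $\bar P^\d$ is absolutely continuous,
\begin{align*}
\bar P^\d(t)=G+\int_t^T\Big[A^\top \bar P^\d+\bar P^\d A+C^\top \bar P^\d C+Q-(\bar\Th^\d)^\top(R+\d I+D^\top\bar P^\d D)\bar\Th^\d\Big]dr.
\end{align*}
Here we used $\bar\Th^\d=-(R+\d I+D^\top\bar P^\d D)^{-1}(B^\top\bar P^\d+D^\top\bar P^\d C+S)$, so that the quadratic term equals $(\bar\Th^\d)^\top(R+\d I+D^\top\bar P^\d D)\bar\Th^\d$.

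\textbf{Step 1 (linear part).} As noted before the statement, the linear part converges in $L^1(0,T;\dbS^n)$ by dominated convergence. This uses $\bar P^\d\to\bar P$ pointwise together with the uniform bound on $\bar P^\d$. That bound comes from $\bar P\le \bar P^\d\le \bar P^{\d_0}$, using monotonicity in $\d$ and Lemma \ref{33lemmabell}.

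\textbf{Step 2 (quadratic part, part (1)).} The quadratic term converges a.s.-$dt$ to $(\bar\Th^*)^\top(R+D^\top\bar PD)\bar\Th^*=(\bar\Th^\circ)^\top(R+D^\top\bar PD)\bar\Th^\circ$. Under the uniform integrability hypothesis, Vitali's theorem upgrades this to $L^1(0,T)$ convergence. Passing to the limit in the integral identity then gives
\begin{align*}
\bar P(t)=G+\int_t^T\Big[A^\top\bar P+\bar PA+C^\top\bar PC+Q-(\bar\Th^\circ)^\top(R+D^\top\bar PD)\bar\Th^\circ\Big]dr.
\end{align*}
Consequences of this identity:
\begin{itemize}
\item $\bar P\in AC(0,T;\dbS^n)$, and $\bar P$ has no jumps; in particular $\bar P(s_k^-)=\bar P(s_k)$.
\item By the range condition (Theorem \ref{31propImsubIm}) and the pseudoinverse identities of the appendix, $(\bar\Th^\circ)^\top(R+D^\top\bar PD)\bar\Th^\circ=(B^\top\bar P+D^\top\bar PC+S)^\top(R+D^\top\bar PD)^\dagger(B^\top\bar P+D^\top\bar PC+S)$.
\item Together with $R+D^\top\bar PD\ge0$, $\bar P$ solves DRE \eqref{DRE} (the $\d=0$ limit of \eqref{31Riccatinot22}) in the regular sense. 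We expect this to be the meaning of ``regular solution'' in (1).
\end{itemize}

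\textbf{Step 3 (part (2)).} Assume $\{|\bar\Th^\d|^2\}$ is uniformly integrable. Then Fatou's lemma (or Vitali) with $\bar\Th^\d\to\bar\Th^*$ a.s.-$dt$ gives $\bar\Th^*\in L^2(0,T;\dbR^{m\times n})$, and in fact $\bar\Th^\d\to\bar\Th^*$ in $L^2$. The family in (1) is dominated by $K|\bar\Th^\d|^2$, because $R\in L^\infty$, $D\in L^\infty$ and $\bar P^\d$ is uniformly bounded. Hence it is uniformly integrable and (1) applies. Moreover $\bar\Th^\circ$ is the minimal-norm element of $\{\Th:(R+D^\top\bar PD)\Th=-(B^\top\bar P+D^\top\bar PC+S)\}$, and $\bar\Th^*$ belongs to this set. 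Therefore $|\bar\Th^\circ|\le|\bar\Th^*|$, so $\bar\Th^\circ\in L^2$ and $\bar P$ is an $L^2$-regular solution. Theorem \ref{thm1}(2) then gives closed-loop solvability. Since $\bar\Th^*=\bar\Th^\circ+\Phi$ with $(R+D^\top\bar PD)\Phi=0$ and $\Phi=\bar\Th^*-\bar\Th^\circ\in L^2$, the same theorem shows $\bar\Th^*$ is an optimal $L^2$-closed-loop strategy.

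\textbf{Main obstacle.} The delicate step is the identification of the limit of the quadratic term. It relies on the convergence $\bar\Th^\d\to\bar\Th^*$ established in the proof of Theorem \ref{31thmclosedloopsolv}. It also requires checking that $\bar\Th^*$ and $\bar\Th^\circ$ give the same quadratic form, which holds because $(R+D^\top\bar PD)\Phi=0$ kills the cross terms.
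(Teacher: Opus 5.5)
Your proposal is correct and follows the same route as the paper. You integrate the regularized Riccati equation, pass to the limit in the linear part by dominated convergence and in the quadratic part by Vitali under the uniform integrability hypothesis, and for (2) reduce to Theorem \ref{thm1}(2). Along the way you supply details the paper leaves implicit: the uniform bound $\bar P\le\bar P^\d\le\bar P^{\d_0}$, the domination of the quadratic family by $K|\bar\Th^\d|^2$, and the minimal-norm argument giving $\bar\Th^\circ\in L^2$. You also correctly read the reference to \eqref{31Riccatinot22} in (1) as the limiting DRE \eqref{DRE}.
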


\begin{proof}

(1).
Provided the uniform integrability, we know that
\begin{align*}&(B^\top \bar P^\d+D^\top \bar P^\d C+S)^\top(R+\d I+ D^\top \bar P^\d D)^{-1}(B^\top \bar P^\d+D^\top \bar P^\d C+S)\\
&\rightarrow\bar \Th^\circ[R+ D^\top \bar PD]\bar \Th^\circ\text{ in $L^1(0,T;\dbS^{ n})$.}\end{align*}
Then $\bar P(\cd)$ is the regular  solution to  the DRE \eqref{DRE}. 

(2). Provided the uniform integrability, by Theorem \ref{thm1}, our result is true in a similar way. 
\end{proof}

The above proposition clearly states the gap from open-loop solvability to closed-loop solvability. In other words, for an  open-loop solvable problem, the full time $[0,T]$ can be decomposed into at most countable many regions on which the problem  is locally closed-loop solvable through the application of Bellman principle. Those uniform integrability assumptions secure the $L^1$ or $L^2$-convergence  across those singular times. This highlights the role of generalized closed-loop solvability  as a bridge to connect  open-loop and closed-loop solvability. 

\begin{remark}\rm
A stronger version of (2) in the above theorem is obtained in \cite{Sun-Li-Yong-2016}. The idea is that the uniform integrability together with the convexity of the functional (necessary for finiteness) will ensure the open-loop solvability, hence the assumption on open-loop solvability can be waived. Then the closed-loop solvability holds. We prefer to present the above easier version since it  provides the path how the open-loop solvability refines to the closed-loop solvability.\end{remark}

Finally, let us present an example for which DRE  equation admits a regular solution due to the uniform integrability as claimed in (1) of Proposition \ref{prop61}, while  an $L^2$-regular solution does not exist. This example shows that the absolute continuity of the value function is merely necessary for closed-loop solvability.

\begin{example}\label{31examplePdis4} \rm
Consider an ODE $$dX(t)=u(t)dt$$
with the cost functional ($T=1$)
$$J_T^0 (s,x;u(\cd))=\int_s^1\Big((1-t) u(t)+X(t)\Big)^2dt.$$
The value function $V _T(s,x)=0$ for $s\in [0,1]$ with $\bar P(\cd)=0.$  The above problem is open-loop solvable and thus generalized closed-loop solvable.

Consider the regularized problem where  the cost functional is uniform convex and the homogeneous $L^2$-feedback strategy exists uniquely.  the differential Riccati equation is stated as
\begin{align*}
\begin{cases}&\dot {\bar  P}^\d+1-( \bar P^\d+1-t)^2 / [\d+(1-t)^2] =0, ~t\in(0,1)\\
& \bar P^\d(1)=0.
\end{cases}
\end{align*} 

 Detailed calculation yields that
$$V_T^{0,\d}(s,x)\leq J_T^{0,\d}(s,x;0)=\int_s^1X(t)^2=(1-s)x^2.$$
Then  $$0\leq ( \bar P^\d+1-t)^2 / [\d+(1-t)^2]\leq 4( 1-t)^2 / [\d+(1-t)^2]\leq 4.$$
Therefore, $\{( \bar P^\d+1-t)^2 / [\d+(1-t)^2]:\d>0\}$ is uniformly integrable. This allows us to take $\d\rightarrow 0^+$ which leads to the differential Riccati equation 
\begin{align*}
\begin{cases}&\dot { \bar P}+1-( \bar P+1-t)^2  [(1-t)^2]^{\dagger} =0, ~t\in(0,1),\\
&  \bar P(1)=0.
\end{cases}
\end{align*} 
 One can  see that $\bar P(\cd)=0$ is a regular solution but not an $L^2$-regular solution  as $\bar\Th(t)=-(1-t)^{-1}.$
\end{example}

Before we finish this section, the following remark is worth here for the possibility of random initials.

\begin{remark}\rm In this paper, we consider initial pairs $(s,x)\in\sD$ with deterministic initial states $x\in\mathbb R^n$, following the formulation of \cite{Sun-Li-Yong-2016} to facilitate comparison with their results. This formulation can be extended to initial states $\xi\in L^2_{\cF_s}(\mathbb R^n)$ by replacing the expectation in the cost functional with conditional expectation given $\cF_s$ and defining the corresponding value as an essential infimum over admissible controls. For the homogeneous problem with deterministic coefficients, finiteness yields the quadratic representation
$$
V^0(s,\xi)=\langle P(s)\xi,\xi\rangle,\qquad\text{a.s.},
$$
where $P(s)$ is a deterministic symmetric matrix. Allowing random initial states provides a natural framework for the Bellman principle. In the homogeneous setting considered here, this extension is consistent with the deterministic-state value function, and the Bellman principle can therefore also be expressed using the formulation adopted in this paper.

For nonhomogeneous problems with  stochastic  drifts, the deterministic value function defined through unconditional expectation does not, in general, retain sufficient information for a Bellman principle expressed solely in terms of that function. A conditional formulation with random initial states is necessary for the dynamic programming in this setting. We will investigate this extension in a subsequent paper.
\end{remark}
\section{Infinite-Horizon Problems}\label{sec:IP}

It has been proven that open-loop and closed-loop solvability are equivalent in \cite{Sun-Yong-2018} for infinite-horizon problems. In this section, we revisit those problems by applying the idea adopted in the previous sections.
Throughout this section, we assume that $A,B,C,D,Q,S,R$ are deterministic constant matrices.

Consider the following $n$-dimensional homogeneous controlled  SDE: 
\begin{align}\label{31SDE4-122}\begin{cases}  &dX(t)=[AX(t) + Bu(t)  ]dt+[CX(t)  + Du(t) ]dW(t),~t\geq s, \\
 &X(s)=x.\end{cases}\end{align}
The  cost functional is defined by 
 \begin{align}\label{31cost4-12}  
  & J^0_\i(s,x;u(\cd)) =\dbE\Big[\int_s^\infty\lan QX(t),X(t)\ran +2\lan SX(t),u(t)\ran+\lan Ru(t),u(t)\ran dt\Big].
\end{align}

For any $u(\cd)\in\sU[0,\infty)$, the solution $X(\cd; s,x;u(\cd))$ to \eqref{31SDE4-122}  belongs to $S^2_\dbF(0,T;\dbR^n)$ for any $T\geq0.$ While this is not sufficient for the well-posedness of the cost functional  due to the lack of integrability over $[0,\infty)$. Therefore, we define the set of {\it admissible controls} by 
$$\sU^{ s,x}_{ad}[s,\infty):=\Big\{u(\cd)\in\sU[s,\infty)\Big|~\dbE\int_s^\infty|X(t; s,x;u(\cd))|^2dt<\infty\Big\}.$$
Here the superscript $(s,x)$ is adopted since this set is initial-dependent.

For any $u(\cd)\in\sU^{ s,x}_{ad}[s,\infty)$, the cost functional $J_\infty( s,x;u(\cd))$ is well-defined. This allows to consider the following optimization problem over $[s,\infty)$:\ms

\noindent {\bf Problem (LQ)$_{\i}$:} for any $( s,x)\in\sD $, find a $\bar u(\cd)\in \sU^{ s,x}_{ad}[s,\infty)$ such that 
\begin{equation*}
J^0_{\i}( s,x;\bar u(\cd))=\inf_{u(\cd)\in \sU^{ s,x}_{ad}[s,\infty)}J^0_{\i}( s,x; u(\cd))=:V^0_\i( s,x).
\end{equation*}
Here $V^0_\i(\cd)$ is called the {\it value function} for Problem (LQ)$_{\i}$. \ms

Problem (LQ)$_{\i}$ is usually referred to as {\it an infinite-horizon homogeneous stochastic LQ control problem.}  Similar as before,  our main effort lies in studying the open-loop solvability and closed-loop solvability, whose definitions will be presented below.

\begin{definition}[Open-Loop Solvability] Problem (LQ)$_{\i}$ is called {\it open-loop solvable   at $( s,x)\in\sD$},
if there exists a $\bar u(\cd)\in \sU^{ s,x}_{ad}[s,\infty)$ such that 
$$V^0_\i( s,x)=J^0_\i( s,x;\bar u(\cd)).$$
Problem (LQ)$_{\i}$ is called {\it open-loop solvable solvable}  if it is open-loop solvable at all $( s,x)\in\sD$.

\end{definition}

To introduce the closed-loop solvability, we need the following definition. We denote the system in \eqref{31SDE4-122} by $[A,C;B,D]$.
 \begin{definition}[Stationary Closed-Loop Strategy and Stabilizer] \rm
 (1). $\Th\in\dbR^{m\times n}$ is called a {\it stationary closed-loop strategy}. Correspondingly, write its realized control by 
 $u^\Th(\cd)$. Note that $X(\cd; s,x;u^{\Th}(\cd))\in S^2_\dbF(s,T;\dbR^n)$ for any $T>0.$
 
 (2). $\Th\in\dbR^{m\times n}$ is called a {\it stabilizer} for $[A,C;B,D]$ if 
$$X(\cd; s,x;u^\Th(\cd))\in L^2_{\dbF}(s,\infty;\dbR^n)\text{ for each }( s,x)\in\sD.$$
Write the set of all stabilizers by $\BS[A,C;B,D]$.
 \end{definition}
 With this, we can define the closed-loop solvability.

\begin{definition}[Closed-Loop Solvability]\rm 
Problem (LQ)$_{\i}$ is called {\it closed-loop solvable},
if there exists a $\Th\in\BS[A,C;B,D]$ such that for all  $( s,x)\in\sD$,
$$V^0_\i( s,x)=J^0_\i( s,x; u^\Th(\cd)).$$
\end{definition}

For the comparison with Problem (LQ)$_T$, we also introduce the  {\it generalized closed-loop solvability.}

\begin{definition}[Generalized Closed-Loop Solvability] \rm
 Problem (LQ)$^0_{\i}$ is called {\it generalized closed-loop solvable},
if there exists a $\Th\in\dbR^{m\times n}$ such that  at any $( s,x)\in\sD$,  there exists an optimal open-loop control $\bar u(\cd)\in \sU_{ad}^{ s,x}[s,\infty)$ such that 
$$ \bar u(\cd)=\Th X( s,x;\bar u(\cd))\text { in }\sU_{ad}^{ s,x}[s,\infty).$$

\end{definition}

The idea for the above definition is similar as before: to consider the $L^2$-integrability of control. While the specific structure of the optimal control problems, we can show that those two formulations of closed-loop solvability are equivalent. The following is the result.

\begin{proposition}Problem (LQ)$^0_{\i}$ is   generalized closed-loop solvable if and only if it is closed-loop solvable.
\end{proposition}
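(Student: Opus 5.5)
We prove the two implications separately; in both, the work reduces to checking that a constant matrix $\Th$ is a stabilizer.

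\emph{Closed-loop solvable $\Rightarrow$ generalized closed-loop solvable.} Take the optimal stabilizer $\Th\in\BS[A,C;B,D]$. For each $(s,x)\in\sD$ the realized control $u^\Th(\cd)=\Th X(\cd;s,x;u^\Th(\cd))$ satisfies
$$\dbE\int_s^\i|u^\Th|^2dt\le|\Th|^2\,\dbE\int_s^\i|X|^2dt<\i,$$
because $X\in L^2_\dbF(s,\i;\dbR^n)$. Hence $u^\Th(\cd)\in\sU^{s,x}_{ad}[s,\i)$, and it is optimal since $V^0_\i(s,x)=J^0_\i(s,x;u^\Th(\cd))$. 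It is therefore an optimal open-loop control of the required feedback form, which gives generalized closed-loop solvability.

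\emph{Generalized closed-loop solvable $\Rightarrow$ closed-loop solvable.} Let $\Th\in\dbR^{m\times n}$ be as in the definition. Fix $(s,x)$ and let $\bar u(\cd)\in\sU^{s,x}_{ad}[s,\i)$ be optimal with $\bar u=\Th\bar X$, where $\bar X=X(\cd;s,x;\bar u(\cd))$.

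First, $\bar X$ solves the closed-loop SDE
$$dX=(A+B\Th)X\,dt+(C+D\Th)X\,dW,\qquad X(s)=x.$$
Since this equation has constant coefficients, it has a unique strong solution on every $[s,T]$. Therefore $\bar X=X(\cd;s,x;u^\Th(\cd))$ and $\bar u=u^\Th$ on every finite interval, hence on all of $[s,\i)$.

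Next, admissibility of $\bar u$ means precisely that
$$\dbE\int_s^\i|X(t;s,x;u^\Th(\cd))|^2dt<\i .$$
As $(s,x)$ was arbitrary, $\Th$ is a stabilizer, i.e.\ $\Th\in\BS[A,C;B,D]$.

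Finally, $V^0_\i(s,x)=J^0_\i(s,x;\bar u(\cd))=J^0_\i(s,x;u^\Th(\cd))$ for all $(s,x)\in\sD$, so Problem (LQ)$_\i$ is closed-loop solvable.

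The only delicate step is identifying $\bar u$ with $u^\Th$ via uniqueness of the closed-loop SDE. Here it is immediate, because a constant $\Th$ is trivially $L^2$ on every finite horizon. This is exactly why no singular times of the kind in Theorem~\ref{31thmclosedloopsolv} can arise in the stationary infinite-horizon setting.
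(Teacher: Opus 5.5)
Your proof is correct and follows the paper's argument. The direction closed-loop $\Rightarrow$ generalized closed-loop is a direct check from the definitions, and you usefully make explicit that $u^\Theta=\Theta X\in L^2$ because $X\in L^2$. For the converse, you identify $\bar u$ with $u^\Theta$ on each finite $[s,T]$, since a constant $\Theta$ is an $L^2$ feedback there; admissibility of $\bar u$ then gives $\mathbb{E}\int_s^\infty|X(t;s,x;u^\Theta)|^2dt<\infty$, which is exactly the state-integrability condition in the stabilizer definition (the paper only cites $u^\Theta\in L^2$ at this point).
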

\begin{proof} `if part': this is trivial from the definitions.

`only if part': At any $(s,x)\in\sD$, suppose $\bar u(\cd)=\Th(\cd) X( s,x;\bar u(\cd))\in \sU_{ad}^{ s,x}[s,\infty)$. For any $T>s,$ we see that $\Th$ is also $L^2$-feedback strategy on $[s,T]$. This yields that 
$\bar u(\cd)=u^{\Th}(\cd)\text{ in }\sU[s,T].$ Since $\bar u(\cd)\in \sU_{ad}^{ s,x}[s,\infty)\subseteq \sU[s,\infty)$, so is $u^\Th(\cd)$. Therefore, $\Th$ is a stabilizer. The proof is complete.
\end{proof}

The above proposition is not surprising because it has been established that  the closed-loop  and open-loop solvability are equivalent for infinite-horizon problems. While the new framework gives a straightforward reason for this: $\Th$, admissible closed-loop strategy on $[0,T]$, is an $L^2$-closed-loop strategy for any $T>0$, since it is invariant in time.

\section{Concluding Remarks}\label{sec:cr}

This paper develops a new framework of generalized closed-loop solvability for stochastic LQ control problems on finite-horizon. It has been shown that the open-loop and generalized closed-loop solvability are equivalent. The new framework justifies  that the previous formulation on closed-loop solvability can be substantially refined to remove the constraint of $L^2$-integrability   of closed-loop strategies. For an  open-loop solvable problems, the full time $[0,T]$ can be decomposed into at most countable many regions on which the problem  is locally closed-loop solvable in the classical sense.  Our results provide a unified theory for  open-loop and closed-solvability for stochastic LQ control problems in finite and infinite-time horizons.

Even our study is focused on the standard homogeneous problems in this paper, the theory has a  great potential in other complex problems such as non-homogeneous problems, problems with switching,  differential games and so on. The authors hope to report more results on this direction in subsequent works.
%
%
%

\appendix

\section{Moore-Penrose pseudoinverse}\label{sec:app}

Let us recall
some results about the Moore-Penrose pseudoinverse.
For $M\in\dbS^n_{\geq 0}$, let its orthogonal decomposition be
 $$M=U^\top \text{diag}(\lambda_1,\cdots,\lambda_k,0,\cdots,0)U,$$ with $U^\top U=I$ and $\lambda_i>0$.
 Define  $$M^\dagger:=U^\top \text{diag}(\lambda_1^{-1},\cdots,\lambda_k^{-1},0,\cdots,0)U\in \dbS^n_{\geq 0},$$
 The $M^\dagger$ is said to be
 the {\it Moore-Penrose Pseudoinverse} of $M$. Because  we only consider an  inverse for non-negative definite symmetric matrices, such an easier version is adopted here only.
 Note
$$M^\dagger M M=MM^\dagger M=MMM^\dagger=M.$$
For $H\in\dbR^{n\times m}$, define
\begin{equation}\label{defIM}\Im[H]:=\{Hx:x\in\dbR^m\},\q \Ker[H]:=\{x\in\dbR^m: Hx=0\}.\end{equation}
The following lemma  is needed.
\begin{lemma}  Suppose  $R\in\dbS^n_{\geq 0}$, $H>0$ and $K,L\in\dbR^{n\times k}$ with $\Im[K]\subseteq \Im[R]$. Then
\begin{align}\label{conRDPD}&\lim_{\delta\rightarrow 0}(R+\d H+o(\d))^{-1}(K+\d L+o(\d))\\
&\q=R^\dagger K\!+\!(I\!-\!R^\dagger R)[(I\!-\!R^\dagger R)H(I\!-\!R^\dagger R)]^{\dagger}(I\!-\!R^\dagger R)(L\!-\!HR^\dagger K). \nonumber\end{align}

\end{lemma}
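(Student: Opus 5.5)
The plan is to pass to coordinates adapted to the orthogonal splitting $\dbR^n=\Im[R]\oplus\Ker[R]$ and to invert the perturbed matrix by a Schur complement taken with respect to the \emph{kernel} block. On the kernel block the matrix is of exact order $\d$, whereas the range block stays of order one.

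\textbf{Coordinates.} Write $\Pi^\perp:=I-R^\dagger R$, which is the orthogonal projection onto $\Ker[R]$. Choose an orthogonal $U$ so that
$$URU^\top=\begin{pmatrix}R_1&0\\0&0\end{pmatrix},\qquad R_1>0.$$
Since both sides of \eqref{conRDPD} transform covariantly under $X\mapsto UXU^\top$ (with $K,L\mapsto UK,UL$), we may assume $R$ already has this form. Partition $H=(H_{ij})$, $K=(K_1;K_2)$ and $L=(L_1;L_2)$ accordingly. The hypothesis $\Im[K]\subseteq\Im[R]$ gives exactly $K_2=0$. Moreover $H>0$ implies $H_{22}>0$, and in particular $H_{22}$ is invertible.

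In these coordinates
$$R^\dagger=\begin{pmatrix}R_1^{-1}&0\\0&0\end{pmatrix},\qquad \Pi^\perp=\begin{pmatrix}0&0\\0&I\end{pmatrix},\qquad \Pi^\perp[\Pi^\perp H\Pi^\perp]^\dagger\Pi^\perp=\begin{pmatrix}0&0\\0&H_{22}^{-1}\end{pmatrix}.$$
Hence the right-hand side of \eqref{conRDPD} equals
$$\begin{pmatrix}R_1^{-1}K_1\\ H_{22}^{-1}(L_2-H_{21}R_1^{-1}K_1)\end{pmatrix}.$$
This explicit vector is the target.

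\textbf{Block inversion.} Set $M_\d:=R+\d H+o(\d)$ with blocks
$$\mathcal A=R_1+O(\d),\qquad \mathcal B=\d H_{12}+o(\d),\qquad \mathcal C=\d H_{21}+o(\d),\qquad \mathcal D=\d\,(H_{22}+o(1)).$$
For small $\d$, $\mathcal D$ is invertible with $\mathcal D^{-1}=\d^{-1}H_{22}^{-1}+o(\d^{-1})$. Consequently
$$\mathcal D^{-1}\mathcal C=H_{22}^{-1}H_{21}+o(1),\qquad \mathcal B\mathcal D^{-1}=H_{12}H_{22}^{-1}+o(1).$$
The Schur complement is then
$$S_\d:=\mathcal A-\mathcal B\mathcal D^{-1}\mathcal C=R_1+O(\d),$$
so $S_\d$ is invertible for small $\d$ and $S_\d^{-1}\to R_1^{-1}$. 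In particular $M_\d$ is invertible for small $\d$.

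The standard block inverse formula now gives
\begin{align*}
(M_\d^{-1})_{11}&=S_\d^{-1}\to R_1^{-1},\\
(M_\d^{-1})_{12}&=-S_\d^{-1}\mathcal B\mathcal D^{-1}=O(1),\\
(M_\d^{-1})_{21}&=-\mathcal D^{-1}\mathcal CS_\d^{-1}\to -H_{22}^{-1}H_{21}R_1^{-1},\\
(M_\d^{-1})_{22}&=\mathcal D^{-1}+\mathcal D^{-1}\mathcal CS_\d^{-1}\mathcal B\mathcal D^{-1}=\d^{-1}H_{22}^{-1}+o(\d^{-1})+O(1).
\end{align*}

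\textbf{Multiplying out.} The numerator is $N_\d=(K_1+O(\d);\,\d L_2+o(\d))$, where the second block has no order-one part because $K_2=0$.

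For the top block,
$$(M_\d^{-1})_{11}(K_1+O(\d))+(M_\d^{-1})_{12}(\d L_2+o(\d))\;\to\; R_1^{-1}K_1+O(1)\cdot O(\d)\;\to\; R_1^{-1}K_1.$$
Note that no extra term survives here, since the $O(1)$ entry $(M_\d^{-1})_{12}$ only multiplies quantities of order $\d$.

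For the bottom block,
$$(M_\d^{-1})_{21}(K_1+O(\d))+(M_\d^{-1})_{22}(\d L_2+o(\d))\;\to\; -H_{22}^{-1}H_{21}R_1^{-1}K_1+H_{22}^{-1}L_2 .$$
This matches the target vector above, which proves \eqref{conRDPD}.

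\textbf{Main obstacle.} The only delicate point is the order bookkeeping in the $(2,2)$ entries. One has to check that the remainders $o(\d)$, which are matrix-valued and arbitrary, enter $\mathcal D$ only as $\d\,o(1)$. This guarantees that $\mathcal D^{-1}$ has leading term exactly $\d^{-1}H_{22}^{-1}$ and that every correction to it is $o(\d^{-1})$. Those corrections are then killed by the factor $\d L_2+o(\d)$, and this relies crucially on $K_2=0$: it is the only place where the range condition is used. If $K_2\neq0$, the bottom block would blow up like $\d^{-1}H_{22}^{-1}K_2$.
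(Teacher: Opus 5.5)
Your proof is correct and takes the same route as the paper. Both rotate so that $R=\mathrm{diag}(\Lambda,0)$, use $\Im[K]\subseteq\Im[R]$ to kill the kernel block of $K$ and $H>0$ to get $H_{22}>0$, and then compute both sides blockwise. The paper only asserts the resulting block matrix after ``detailed calculation''; your Schur complement on the kernel block, with its $o(\delta)$ bookkeeping, is exactly that omitted computation. Transforming $K,L$ as $UK,UL$ rather than the paper's $UKU^\top$ is also the right way to handle the non-square $K,L$.
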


\begin{proof}
Write $\Lambda= \text{diag}(\lambda_1,\cdots,\lambda_k)$ with $\lambda_i>0$. Suppose that
$$R=U^\top\begin{pmatrix}\Lambda&0\\0&0\end{pmatrix}U,$$ with $U^\top U=I$.
Then \eqref{conRDPD} is equivalent to
\begin{align*}&\lim_{\delta\rightarrow 0}\left(\begin{pmatrix}\Lambda&0\\0&0\end{pmatrix}+\d U H U^\top +o(\d)\right)^{-1}\left(UKU^\top+\d U LU^\top+o(\d)\right)\nonumber\\
&\q=\begin{pmatrix}\Lambda^{-1}&0\\0&0\end{pmatrix}  UKU^\top+\begin{pmatrix}0&0\\0&I\end{pmatrix}\left[\begin{pmatrix}0&0\\0&I\end{pmatrix}UHU^\top\begin{pmatrix}0&0\\0&I\end{pmatrix}\right]^{\dagger}\\
&\qq\qq\qq\qq\times\begin{pmatrix}0&0\\0&I\end{pmatrix} \left(U LU^\top-UHU^\top\begin{pmatrix}\Lambda^{-1}&0\\0&0\end{pmatrix} UKU^\top \right). \end{align*}
For $M=H,K,L$, define $\widetilde M=U MU^\top $ with
$\widetilde M=\begin{pmatrix}\widetilde M_{11}&\widetilde M_{12}\\\widetilde M_{21}&\widetilde M_{22}\end{pmatrix}.$
Because $\Im[K]\subset \Im[R]$, we have $\widetilde K_{22}=0$ and $\widetilde K_{21}=0.$ Because $H>0$ (which implies  $\widetilde H>0$), $\widetilde H_{22}>0$.
Detailed calculation yields that both
the left-hand side
and the right-hand side
are equal to
$$\begin{pmatrix}
\Lambda^{-1}\widetilde K_{11}
&
\Lambda^{-1}\widetilde K_{12}
\\
\widetilde H_{22}^{-1}
\left(
\widetilde L_{21}
-\widetilde H_{21}\Lambda^{-1}\widetilde K_{11}
\right)
&
\widetilde H_{22}^{-1}(\widetilde L_{22}-H_{21}\Lambda^{-1}\widetilde K_{11})
\end{pmatrix}$$
The proof is complete.
\end{proof}
\end{document}